\numberwithin{equation}{section}
\newtheorem{theorem}{Theorem}[section]
\newtheorem{thm}{Theorem}[section]
\newtheorem{remark}{Remark}[section]
\newtheorem{rem}{Remark}[section]
\newtheorem{lem}{Lemma}[section]
\newtheorem{definition}{Definition}[section]
\DeclareMathAlphabet{\mathpzc}{OT1}{pzc}{m}{it}
\title{Fractional diffusions with time-varying coefficients}
\author{$\text{Roberto Garra}_1$, $\text{Enzo Orsingher}_2$, $\text{Federico Polito}_3$ \\
    \footnotesize (1) -- Dipartimento di Scienze Statistiche, ``Sapienza'' Universit\`{a} di Roma\\
        \footnotesize P.~le A.\ Moro 5, 00185 Roma, Italy\\
    \footnotesize Email address: roberto.garra@sbai.uniroma1.it\\
    \footnotesize (2) -- Dipartimento di Scienze Statistiche, ``Sapienza'' Universit\`{a} di Roma\\
    \footnotesize P.~le A.\ Moro 5, 00185 Roma, Italy\\
    \footnotesize Email address: enzo.orsingher@uniroma1.it\\
    \footnotesize (3) -- Dipartimento di Matematica ``G.~Peano'', Universit\`{a} degli Studi di Torino\\
    \footnotesize Via Carlo Alberto 10, 10123 Torino, Italy\\
    \footnotesize Email address: federico.polito@unito.it\\
    }
\begin{document}

    \maketitle

    \begin{abstract}

	    This paper is concerned with the fractionalized diffusion equations governing the law of the fractional 
	    Brownian motion $B_H(t)$. We obtain solutions of these equations which are probability laws extending that of $B_H(t)$.
	    Our analysis is based on McBride fractional operators generalizing the hyper-Bessel operators $L$ and converting
	    their fractional power $L^{\alpha}$ into Erd\'elyi--Kober fractional integrals. We study also probabilistic 
	    properties of the r.v.'s whose distributions satisfy space-time fractional equations involving Caputo and Riesz 
	    fractional derivatives. Some results emerging from the analysis of fractional equations with time-varying coefficients 
	    have the form of distributions of time-changed r.v.'s.
	
        \noindent \emph{Keywords}: Fractional Brownian motion; Grey Brownian motion; Fractional derivatives; Generalized Mittag--Leffler 
        functions; Riesz fractional derivatives.

    \end{abstract}
    
    \section{Introduction}

	    In this paper we consider fractional diffusion equations with time-varying coefficients. This investigation 
	    is inspired by the fractional extension of the diffusion equation governing the law of the fractional Brownian motion 
	    $B_{H}(t)$, $t\geq 0$, $0<H<1$. Fractional Brownian motion is a zero-mean Gaussian process with covariance function
	    \begin{equation}
    		\mathbb{C}\text{ov}\left(B_{H}(s), B_H(t)\right)= \frac{1}{2}\bigg\{|t|^{2H}+|s|^{2H}-|t-s|^{2H}\bigg\}, \quad t,s>0.
	    \end{equation}
	    
	    We first deal with equations of the form 
		\begin{equation}
			\label{iuno}
			\left(t^{1-2H}\frac{\partial}{\partial t}\right)^{\alpha}u_{\alpha}(x,t)=H^{\alpha}
			\frac{\partial^2}{\partial x^2}u_{\alpha}(x,t),\qquad \alpha\in(0,1), \ H\in (0,1), \ x \in \mathbb{R},
		\end{equation}
		where the operator appearing in \eqref{iuno} is a special case of the fractional power $L^{\alpha}$ of the 
		hyper-Bessel-type operator
		\begin{equation}
		L= t^{a_1}\frac{\mathrm{d}}{\mathrm{d}t}t^{a_2}\frac{\mathrm{d}}{\mathrm{d}t}\dots\frac{\mathrm{d}}{\mathrm{d}t}t^{a_{n+1}},\qquad t>0,
		\end{equation}
		where $a_j$, $j= 1, \dots, n+1$, are real numbers and $n\in \mathbb{N}$.

		For $\alpha = 1$, \eqref{iuno} coincides with the equation governing the one-dimensional probability law of 
		$B_H(t)$, while for $H= 1/2$, \eqref{iuno} is the classical time-fractional diffusion equation.
		There is a considerably large literature about fractional diffusion equations with constant coefficients, dating back to the second half
		of the Eighties (\cite{Fujita,Wyss}, for a detailed review see \cite{mainardi}). Amongst the more recent papers we refer, for example, to
		\cite{Turner, Luc, meer}.
		In some specific cases, the relationship 
		between the iterated Brownian motion and fractional diffusion equations was established and fully investigated \cite{Allouba, Ors}.
		
		The theory of powers of Bessel-type operators was developed 
		by Dimovski \cite{Dim66}, McBride \cite{mc2,mc1,mca}, McBride and Lamb \cite{lam}. The fractional
		powers of Bessel operators were recently considered in \cite{RG} in the analysis of
		random motions at finite velocity.
	
		The theory developed by McBride permits us to express the operator $L^{\alpha}$ in terms of products
		of Erd\'ely--Kober fractional integrals $I_m^{\alpha}$, defined as 
		\begin{equation}
			\left(I_m^{\alpha}f\right)(t)= \frac{m}{\Gamma(\alpha)}\int_0^t\left(t^m-u^m\right)^{\alpha-1}u^{m-1}f(u)
			\, \mathrm{d}u, \qquad \alpha >0, t>0, m>0.
		\end{equation}
	
		As far as we are aware, the generalization of time-fractional equations with time-varying coefficients by means of the McBride 
		theory have not been studied so far, neither 
		from an analytical nor from a probabilistic point of view. 
	
		The fundamental solution of \eqref{iuno} can be written as 
		\begin{equation}
			\label{idue}
			u_{\alpha}(x,t)=\frac{1}{2^{1-\alpha/2}t^{H\alpha}}
			W_{-\alpha/2, 1-\alpha/2}\left(-\frac{2^{\alpha/2}|x|}{t^{H\alpha}}\right), \qquad x \in \mathbb{R},
		\end{equation}
		where 
		\begin{equation}
			W_{\gamma, \beta}(x)= \sum_{k=0}^{\infty}\frac{x^k}{k!\Gamma(\gamma k +\beta)}, \qquad \gamma >-1, \beta>0, x\in \mathbb{R},
		\end{equation}
		is the Wright function.
	
		Formula \eqref{idue} shows that the solution $u_{\alpha}(x,t)$ of \eqref{iuno} coincides with the fundamental solution 
		of the time-fractional diffusion equation with the time-scale change $t\rightarrow t^{2H}$.
		We prove that $u_{\alpha}(x,t)$ is the law of the r.v.\ $X_{\alpha, H}(t)$ which is connected to the 
		fractional Brownian motion by means of the relation
		\begin{equation}
			X_{\alpha, H}(t)\overset{d}{=}X_{2\alpha,H}\left(|B_H(t)|^{\frac{1}{2H}}\right), \quad 0<\alpha<\frac{1}{2}.
		\end{equation} 
		
		We note that the r.v.\ $X_{\alpha, H}(t)$ has variance
		\begin{equation}
	 	\label{ivari}
	 		\mathbb{V}\text{ar} \, X_{2\alpha, H}(t)=
	 		\frac{t^{2H\alpha}}{2^{\alpha-1}\Gamma(\alpha+1)}
	 	\end{equation}
		and thus the presence of the fractional derivative has a decreasing effect on the variance.
	 
		In Section \ref{mala} we establish a relationship between solutions of fractional equations
		\begin{equation}
			\left(t^{1-2H}\frac{\partial}{\partial t}\right)^{\alpha}u = H^{\alpha}\frac{\partial^2 u}{\partial x^2}
		\end{equation} 
		and solutions of higher order diffusion equations
		\begin{equation}
			t^{1-2H}\frac{\partial u}{\partial t}= (-1)^k H \frac{\partial^k u}{\partial x^k}, \qquad k>2.
		\end{equation}
		This type of relationship frequently appeared in the recent literature in the case of fractional and 
		higher-order diffusion equations with constant coefficients (see e.g. \cite{mirko1}).	 
		
		In the second part of this paper, we consider 
		fractional equations of the form 
		\begin{equation}
			\label{imultifr}
	 		\begin{cases}
	 			{}^C D^\nu_{0^+} u \left( x,t \right) =
	 			H t^{2H-1} \frac{\partial^2}{\partial x^2} u \left( x,t \right),
	 			\qquad \nu \in (0,1), \: x \in \mathbb{R}, \: t >0,\\
	 			u \left( x,0 \right) = \delta \left( x \right),
	 		\end{cases}
	 	\end{equation}
		where ${}^C D^\nu_{0^+}$ is the $\nu$-th order Caputo time-fractional derivative. This problem was partially considered in \cite{fepr}.
		The idea is to compare the two approaches \eqref{iuno} and \eqref{imultifr}.
	
		The fundamental solution of \eqref{imultifr} reads
		\begin{align}
			\label{isolutiona}
				u(x,t)
				= \frac{1}{2 \pi} \int_{-\infty}^{+\infty} e^{-i \beta x}
				E_{\nu, 1+\frac{2H-1}{\nu}, \frac{2H-1}{\nu}} \left(
				- H \beta^2 t^{\nu+2H-1} \right) \textrm d\beta. 
		\end{align}
		where the function
		\begin{align}
			\label{imlgen}
			E_{\alpha, m, l} (z) &= 1+ \sum_{k=1}^\infty z^k \prod_{j=0}^{k-1}
			\frac{ \Gamma \left( \alpha \left( jm+l \right)+1 \right)}{\Gamma \left( \alpha
			\left( jm+l+1 \right) +1 \right)}, \qquad z \in \mathbb{R},\\
			\nonumber &\text{$\alpha, m, l\in \mathbb{R}$ such that $\alpha, m >0$ and $\alpha(jm+l)\neq \mathbb{Z}^-$},
		\end{align}
	    is the generalized three-index Mittag--Leffler function first introduced by Kilbas and Saigo in \cite{kilsa1,kilsa2,kilsa3};
		see also the recent monograph \cite{Rogosin}.
		
		Our research is related to the recent papers \cite{Bologna1, Bologna2},
		where fractional generalizations of the heat equation for the fractional Brownian motion were inspired
		by some physical problems. In particular, in these papers the authors study 
		\begin{equation}
			\label{grigo}
 			{}^C D^\nu_{0^+} u \left( x,t \right) =
 			D_H t^{2H-1} \frac{\partial^\beta}{\partial |x|^{\beta}} u \left( x,t \right),
 			\qquad \nu \in (0,1), \: x \in \mathbb{R}, \: t >0,
	 	\end{equation}	
		in order to take into account the joint action of trapping and correlated fluctuations for anomalous diffusions in heterogeneous media.
		In this framework we recover some of the analytical results discussed in \cite{Bologna2}. For the special case $H= 1/4$ and $\nu= 3/4$, 
		we are able to give an explicit and interesting form for the fundamental solution of \eqref{grigo}
		as the law of a time-changed Brownian motion.

	\section{Preliminaries}

		In this section we recall the main definitions and properties of fractional derivatives and fractional powers of
		hyper-Bessel-type differential operators for the convenience of the reader. There is a large literature
		about fractional powers of differential operators. We refer, for example, to the monographs \cite{Martinez} and \cite{mc1}.
		We start our short review recalling the definitions of Riemann--Liouville fractional integrals and derivatives.
		\begin{definition}[Riemann--Liouville integral]
		    Let $f \in L^1_{\text{loc}}[a,b]$, where $-\infty \le a < t < b \le \infty$,
            be a locally integrable real-valued function.
			The Riemann--Liouville integral is defined as
			\begin{align}
				\label{rlint}
			    I^{\alpha}_{a^+} f(t) & =\frac{1}{\Gamma(\alpha)}\int_{a}^t\frac{f(u)}{%
			    (t-u)^{1-\alpha}}\mathrm du
			    = (f \ast K_\alpha )(t), \qquad \alpha > 0,
			\end{align}
			where $K_\alpha (t) = t^{\alpha-1}/\Gamma(\alpha)$.
		\end{definition}
	
		\begin{definition}[Riemann--Liouville derivative]
		    Let $f \in L^1[a,b]$, $-\infty \le a < t < b \le \infty$, and $f \ast K_{m-\alpha} \in W^{m,1}[a,b]$,
                $m = \lceil \alpha \rceil$,
		    $\alpha>0$,
		    where $W^{m,1}[a,b]$ is the Sobolev space defined as
		    \begin{align}
		        W^{m,1}[a,b] = \left\{ f \in L^1[a,b] \colon \frac{\mathrm d^m}{\mathrm dt^m} f \in L^1[a,b] \right\}.
		    \end{align}
		    The Riemann--Liouville derivative of order $\alpha >0$ is defined as
		    \begin{align}
		        \label{rlder}
		        D^\alpha_{a^+}f (t) = \frac{\mathrm d^m}{\mathrm dt^m}I_{a^+}^{m-\alpha}f(t) = \frac{1}{%
		        \Gamma(m-\alpha)} \frac{\mathrm d^m}{\mathrm dt^m} \int_{a}^t (t-s)^{m-1-\alpha}f(s) \mathrm ds.
		    \end{align}
		\end{definition}
		
		It is a simple matter to show that the Riemann--Liouville 
		fractional derivative is rigorously the fractional power of the operator $D = \mathrm d /\mathrm d t$
		in the classical theory of fractional powers of operators (see e.g.\ \cite{mirko} and the 
		references therein). We now recall that the so called Caputo derivative, widely used in applications, is in fact
		a regularization of the Riemann--Liouville derivative.
		For $n\in \mathbb{N}$, we denote by $AC^{n}\left[a,b\right]$ the
		space of real-valued functions $f\left( t\right) $, $t>0$, which have
		continuous derivatives up to order $n-1$ on $\left[ a,b\right] $
		such that $f^{\left(n-1\right) }\left(t\right)$ belongs to the space of absolutely continuous functions
		$AC\left[a,b\right]:$
		\begin{equation}
			AC^{n}\left[a,b\right] =\left\{ f:\left[a,b\right] \rightarrow
			\mathbb{R}\colon\frac{\mathrm d^{n-1}}{\mathrm dx^{n-1}}f \left( x\right) \in AC\left[
			a,b\right] \right\} .
		\end{equation}
			
		\begin{definition}[Caputo derivative]
		    Let $\alpha>0$, $m = \lceil \alpha \rceil$, and $f \in AC^m[a,b]$.
		    The Caputo derivative of order $\alpha>0$ is defined as
		    \begin{equation}
		        \label{Capu}
		        {}^C D^{\alpha}_{a^+}f(t)= I_{a^+}^{m-\alpha}\frac{\mathrm d^m}{\mathrm dt^m}f(t)= \frac{1%
		        }{\Gamma(m-\alpha)}\int_a^{t}(t-s)^{m-1-\alpha}\frac{\mathrm d^m}{\mathrm ds^m}f(s) \, \mathrm ds.
		    \end{equation}
		\end{definition}

		Note that in the space of functions belonging to $AC^m[a,b]$ the 
		relation between Riemann--Liouville and Caputo derivatives is given by the following

		\begin{theorem}
		    \label{gianduia}
		    For $f \in AC^m[a,b]$, $m=\lceil \alpha \rceil$, $\alpha\in\mathbb{R}^+\backslash\mathbb{N}$,
		    the Caputo fractional derivative of order $\alpha$ of $f$
		    exists almost everywhere and it can be written as
		    \begin{align}
		        \label{perepe}
		      {}^C  D_{a^+}^\alpha f(t) =  D_{a^+}^\alpha\left( f(t) - \sum_{k=0}^{m-1}\frac{(t-a)^{k}}{k!} f^{(k)}(a^+)\right)
		      = D_{a^+}^\alpha f(t)-\sum_{k=0}^{m-1}\frac{(t-a)^{k-\alpha}}{\Gamma(k+1-\alpha)} f^{(k)}(a^+).
		    \end{align}
		\end{theorem}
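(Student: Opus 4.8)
The plan is to establish the first equality in \eqref{perepe} directly from the definitions, and then to obtain the second equality by evaluating the Riemann--Liouville derivative of the Taylor polynomial explicitly. Throughout we use that $f \in AC^m[a,b]$, which guarantees (by the classical characterisation of absolutely continuous functions) that $f^{(m)} \in L^1[a,b]$, that the pointwise values $f^{(k)}(a^+)$, $k=0,\dots,m-1$, exist, and that the Taylor-type representation $f(t) = \sum_{k=0}^{m-1}\frac{(t-a)^k}{k!}f^{(k)}(a^+) + \frac{1}{(m-1)!}\int_a^t (t-s)^{m-1} f^{(m)}(s)\,\mathrm ds$ holds. In particular the integral remainder is exactly $I_{a^+}^m f^{(m)}(t)$.

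First I would write $g(t) = f(t) - \sum_{k=0}^{m-1}\frac{(t-a)^k}{k!}f^{(k)}(a^+)$, so that by the Taylor representation above $g(t) = I_{a^+}^m f^{(m)}(t)$, hence $g$ and its derivatives up to order $m-1$ vanish at $a^+$ and $g \ast K_{m-\alpha} \in W^{m,1}[a,b]$, so $D_{a^+}^\alpha g$ is well defined. Now compute: $D_{a^+}^\alpha g(t) = \frac{\mathrm d^m}{\mathrm dt^m} I_{a^+}^{m-\alpha} I_{a^+}^m f^{(m)}(t) = \frac{\mathrm d^m}{\mathrm dt^m} I_{a^+}^{2m-\alpha} f^{(m)}(t)$, using the semigroup property $I_{a^+}^{\mu}I_{a^+}^{\nu} = I_{a^+}^{\mu+\nu}$ for Riemann--Liouville integrals, valid on $L^1$. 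Since $2m-\alpha \ge m$, one may pull $m$ derivatives inside (the integrand $I_{a^+}^{2m-\alpha}f^{(m)}$ lies in $W^{m,1}$ because $f^{(m)} \in L^1$), obtaining $\frac{\mathrm d^m}{\mathrm dt^m} I_{a^+}^{2m-\alpha} f^{(m)} = I_{a^+}^{m-\alpha} f^{(m)}(t) = {}^C D_{a^+}^\alpha f(t)$, which is precisely the Caputo derivative \eqref{Capu}. This gives the first equality in \eqref{perepe} and simultaneously the existence a.e.\ of ${}^C D_{a^+}^\alpha f$.

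For the second equality it remains to evaluate $D_{a^+}^\alpha\!\left[(t-a)^k\right]$ for $0 \le k \le m-1$. This is the standard computation: for $\beta > -1$, $I_{a^+}^{\gamma}(t-a)^\beta = \frac{\Gamma(\beta+1)}{\Gamma(\beta+\gamma+1)}(t-a)^{\beta+\gamma}$ by the Beta integral, and differentiating $m$ times gives $D_{a^+}^\alpha (t-a)^k = \frac{\Gamma(k+1)}{\Gamma(k+1-\alpha)}(t-a)^{k-\alpha}$, valid for all $k \ge 0$ since $\alpha \notin \mathbb{N}$ keeps the Gamma function in the denominator finite and nonzero. Substituting termwise into $D_{a^+}^\alpha g = D_{a^+}^\alpha f - \sum_{k=0}^{m-1}\frac{f^{(k)}(a^+)}{k!} D_{a^+}^\alpha(t-a)^k$ yields the displayed closed form. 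The main technical point to handle carefully is the interchange of $\frac{\mathrm d^m}{\mathrm dt^m}$ with the fractional integral when composing $I_{a^+}^{m-\alpha}$ with $I_{a^+}^m$ — i.e.\ justifying $\frac{\mathrm d^m}{\mathrm dt^m} I_{a^+}^{2m-\alpha}h = I_{a^+}^{m-\alpha}h$ for $h=f^{(m)} \in L^1$ — which rests on the fact that $I_{a^+}^{2m-\alpha}h$ is $m$ times differentiable with the expected derivative precisely because $2m-\alpha > m-1$; this is where the hypothesis $f\in AC^m[a,b]$ (forcing $f^{(m)}\in L^1$) does the work, and it is the only step where one must be attentive to regularity rather than just formal manipulation.
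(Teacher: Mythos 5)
Your proof is correct and follows the standard route: write $f$ minus its Taylor polynomial as $I_{a^+}^m f^{(m)}$ via the integral-remainder form of Taylor's theorem (available precisely because $f\in AC^m[a,b]$), use the semigroup property of Riemann--Liouville integrals to identify $D_{a^+}^\alpha$ of that difference with $I_{a^+}^{m-\alpha}f^{(m)}$, and then evaluate $D_{a^+}^\alpha (t-a)^k$ termwise. The paper does not prove this theorem itself but defers to Diethelm (Theorem 3.1, p.~50), whose argument is essentially the one you give, so there is nothing to flag beyond noting that your justification of pulling the $m$ derivatives through $I_{a^+}^{2m-\alpha}$ is exactly the point that needs (and receives) care.
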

		For the proof of Theorem 2.1 consult \cite{diet}, page 50, Theorem 3.1.\\
		We observe that the difference between the two definitions is given by 
		the Riemann--Liouville derivative of the Taylor polynomial of order $m-1$ centered in the lower extremum of integration
		$t = a$. Hereafter without loss of generality we will consider
		$a= 0$.
		
		In view of this well-known relation between the two different fractional derivatives, we now recall the 
		definition of fractional hyper-Bessel-type differential operators, studied in a series of works
		by McBride \citep{mc2,mc1,mca} and McBride and Lamb \citep{lam}.
        In \cite{mca} McBride considered the generalized hyper-Bessel operator
        \begin{equation}
            \label{L}
            L=t^{a_1}\frac{\mathrm d}{\mathrm{d}t}t^{a_2}\dots t^{a_n}\frac{\mathrm d}{\mathrm{d}t}t^{a_{n+1}},\qquad t>0,
        \end{equation}
        where $n$ is an integer number and $a_1,\dots, a_{n+1}$ are real numbers.
        
        The operator $L$ defined in \eqref{L} acts on the functional space
        \begin{equation}
            F_{p,\mu}=\{f \colon t^{-\mu}f(t)\in F_p\},
        \end{equation}
        where
        \begin{equation}
            F_p=\left\{f\in C^{\infty} \colon t^k \frac{\mathrm d^k f}{\mathrm dt^k} \in L^p(0, \infty), \: k=0,1,\dots\right\},
        \end{equation}
        for $1 \leq p < \infty$ and for any real number $\mu$ (see for details \cite{mc2, mc1}).
        The operator $L$ was first introduced and studied, as far as we know, by Dimovski \cite{Dim66}.
         
        Let us introduce the following constants related to the general operator $L$.
        \begin{align*}
			a=\sum_{k=1}^{n+1}a_k, \qquad m= |a-n|,
			\qquad b_k= \frac{1}{m}\left(\sum_{i=k+1}^{n+1}a_i+k-n\right), \quad k=1, \dots, n.
        \end{align*}
        The definition of the fractional hyper-Bessel-type operator is given by
  	    \begin{definition}
        Let $m= n-a>0$, $f\in F_{p,\mu}$ and
        \begin{align*}
            b_k\in A_{p,\mu,m}:=\left\{\eta \in \mathbb{R}
            \colon m\eta+\mu+m\neq \frac{1}{p}-ml, \: l=0, 1, 2,\dots\right\}, \qquad k=1,\dots, n.
        \end{align*}
        Then
        \begin{equation}
         	\label{pot}
            L^{\alpha}f=m^{n\alpha}t^{-m\alpha}\prod_{k=1}^{n}I_{m}^{b_k,-\alpha}f,
        \end{equation}
        where, for $\alpha >0$ and $m\eta+\mu+m > \frac{1}{p}$
        \begin{equation}
            \label{mc1-2}
            I_m^{\eta, \alpha}f=
            \frac{t^{-m\eta-m\alpha}}{\Gamma(\alpha)}\int_0^t(t^m-u^m)^{\alpha-1}u^{m\eta}f(u)\, \mathrm{d}(u^m),
        \end{equation}
        and for $\alpha\leq 0$
        \begin{equation}
            \label{mc2}
              	I_m^{\eta, \alpha}f=(\eta+\alpha+1)I_m^{\eta, \alpha+1}f+\frac{1}{m} I_m^{\eta, \alpha+1}
              	\left(t\frac{\mathrm{d}}{\mathrm{d}t}f\right).
  	        \end{equation}
  	    \end{definition}
        For a full discussion about this approach to fractional operators we refer to \cite{mca}, page 525.
        According to the analysis developed in this paper, the domain of the operators 
        $L^{\alpha}$ is the space $F_{p,\mu}$.
        
        A key-role in the next sections is played by the regularized Caputo-like counterpart of the operator \eqref{pot}.
        In analogy with Theorem \ref{gianduia} we introduce the following 
        \begin{definition}
        	Let $\alpha$ be a positive real number, $m= n-a>0$, $f\in F_{p,\mu}$ is such that
            \begin{equation*}
            	L^\alpha\left( f(t) - \displaystyle\sum_{k=0}^{b-1}\frac{t^{k}}{k!} f^{(k)}(0^+)\right)
            \end{equation*}
            exists. Then we define ${}^C L^\alpha$ by
            \begin{equation}
	            \label{pot1}
	            {}^C  L^\alpha f(t) =  L^\alpha\left( f(t) - \sum_{k=0}^{b-1}\frac{t^{k}}{k!} f^{(k)}(0^+)\right),
            \end{equation}
            where $b = \lceil \alpha \rceil$.
        \end{definition}
        
        The relevance of this definition for the applications is due to the fact that for physical reasons we are interested in solving
        fractional Cauchy problems involving initial conditions on the functions (and their integer order derivatives),
        while in the case of Riemann--Liouville operators we must consider initial conditions involving fractional integrals
        and derivatives.

    \section{Fractional diffusions with time-dependent coefficients}

		Let us consider the fractional diffusion equation with time-dependent coefficients
		\begin{equation}
			\label{uno}
			\left(t^{1-2H}\frac{\partial}{\partial t}\right)^{\alpha}u_{\alpha}(x,t)=H^{\alpha}\frac{\partial^2}{\partial x^2}u_{\alpha}(x,t),
		\end{equation}
		where $H\in (0,1)$ is the Hurst parameter, $\alpha \in (0,1)$ and, according to Definition 2.4, the fractional operator is defined
		(for $n=1$, $a_1 = 1-2H$, $a_2 = 0$ and thus $b_1 = 0$, $m= 2H$) as
		\begin{equation}
			\label{cia}
			\left(t^{1-2H}\frac{\partial}{\partial t}\right)^{\alpha} u_{\alpha}(x,t)=(2H)^{\alpha}t^{-2H\alpha}
			I^{0,-\alpha}_{2H}u_{\alpha}(x,t),
		\end{equation}
		where $I^{0,-\alpha}_{2H}$ is the Erd\'elyi--Kober fractional integral defined in \eqref{mc1-2}.\\
		According to \cite{mc1}, the fractional power of the operator
		$\left(t^{1-2H}\partial/\partial t\right)$ admits the representation \eqref{cia} in the domain $F_{p,\mu}$,
		where $\mu \in \mathbb{R}$ should be taken such that
		$$b_1 = 0\subseteq A_{p,\mu, 2H}:\{\eta \in \mathbb{R}:
		2H\eta+\mu +2H\neq\frac{1}{p}-2Hl, \ l= 0,1,\dots \}.$$
		This constraint gives the implicit relation between the parameter $\mu$ of the domain of the operator and the parameter
		$H$ of the specific operator that we are considering here.
		
		We can call $\eqref{uno}$ a bifractional equation, depending on the Hurst parameter $H$ (which, in the case $\alpha = 1$, is related to
		the fractional Brownian motion) and the real parameter $\alpha$ (that is the fractional power of the operator).
		Notice that for $\alpha = 1$ we retrieve the heat equation with one time-dependent coefficient,
		\begin{equation}
			t^{1-2H}\frac{\partial u}{\partial t}=H\frac{\partial^2 u}{\partial x^2},
		\end{equation}
		satisfied by the probability law of the fractional Brownian motion.
		
		In view of definition 2.5 the regularized Caputo-like counterpart of the fractional operator \eqref{cia} reads
		\begin{equation}
			\label{ci}
			{}^C\left(t^{1-2H}\frac{\partial}{\partial t}\right)^{\alpha} u_{\alpha}(x,t)=
			\left(t^{1-2H}\frac{\partial}{\partial t}\right)^{\alpha}u_{\alpha}(x,t)
			-(2H)^{\alpha}\frac{t^{-2H\alpha}}{\Gamma(1-\alpha)}u_{\alpha}(x,0),
		\end{equation}
		in complete analogy with the classical theory of fractional derivatives.
		\begin{theorem}
			The solution to the Cauchy problem
			\begin{equation}
				\begin{cases}
					\label{cas}
					{}^C\left(t^{1-2H}\frac{\partial}{\partial t}\right)^{\alpha} u_{\alpha}(x,t)= H^{\alpha}
					\frac{\partial^2}{\partial x^2}u_{\alpha}(x,t),
					\quad \alpha \in (0,1), \: x\in \mathbb{R} \ t > 0,\\
					u_{\alpha}(x,0)=\delta(x),
				\end{cases}
			\end{equation}
			is given by
			\begin{equation}
				\label{due}
				u_{\alpha}(x,t)=\frac{1}{2^{1-\alpha/2}t^{H\alpha}}
				W_{-\alpha/2, 1-\alpha/2}\left(-\frac{2^{\alpha/2}|x|}{t^{H\alpha}}\right).
			\end{equation}
		\end{theorem}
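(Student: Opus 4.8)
The plan is to remove the time-varying coefficient by the deterministic time change $\tau = t^{2H}$, reducing \eqref{cas} to the classical time-fractional diffusion equation. The starting point is the elementary rule, immediate from the representation \eqref{cia} and the action of the Erd\'elyi--Kober operator $I^{0,-\alpha}_{2H}$ on power functions (read off from \eqref{mc1-2} for positive order and continued to order $-\alpha$ through \eqref{mc2}),
\[
    \left(t^{1-2H}\frac{\partial}{\partial t}\right)^{\alpha} t^{\gamma}
    = (2H)^{\alpha}\,\frac{\Gamma\!\left(1+\frac{\gamma}{2H}\right)}{\Gamma\!\left(1-\alpha+\frac{\gamma}{2H}\right)}\,t^{\gamma-2H\alpha}.
\]
Writing $\tau = t^{2H}$ and $v(x,\tau) = u_{\alpha}(x,t)$, this shows that $\left(t^{1-2H}\partial_t\right)^{\alpha}$ acts as $(2H)^{\alpha} D^{\alpha}_{0^+}$ in the variable $\tau$, and correspondingly that its regularization \eqref{ci} becomes $(2H)^{\alpha}\,{}^C D^{\alpha}_{0^+}$ in $\tau$: indeed $t^{-2H\alpha} = \tau^{-\alpha}$, and the term subtracted in \eqref{ci} is precisely the Caputo--Riemann--Liouville correction of Theorem \ref{gianduia} with $m=1$ and lower limit $0$.

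Under this substitution, and using $H^{\alpha}/(2H)^{\alpha} = 2^{-\alpha}$, the Cauchy problem \eqref{cas} turns into the standard one
\[
    {}^C D^{\alpha}_{0^+} v(x,\tau) = 2^{-\alpha}\,\frac{\partial^2}{\partial x^2} v(x,\tau),
    \qquad v(x,0) = \delta(x),\qquad \tau > 0,\ x\in\mathbb{R}.
\]
I would then solve it by Fourier transform in the space variable: denoting by $\widehat v(\beta,\tau)$ the transform of $v$, the equation collapses to the scalar fractional relaxation equation ${}^C D^{\alpha}_{0^+}\widehat v(\beta,\tau) = -2^{-\alpha}\beta^2\,\widehat v(\beta,\tau)$ with $\widehat v(\beta,0) = 1$, whose unique solution is $\widehat v(\beta,\tau) = E_{\alpha}\!\left(-2^{-\alpha}\beta^2\tau^{\alpha}\right)$, with $E_{\alpha}$ the classical one-parameter Mittag--Leffler function. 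Inverting the Fourier transform by means of the classical identity
\[
    \frac{1}{2\pi}\int_{-\infty}^{+\infty} e^{-i\beta x}\,E_{\alpha}\!\left(-c\beta^2\tau^{\alpha}\right)\mathrm d\beta
    = \frac{1}{2\sqrt{c}\,\tau^{\alpha/2}}\,W_{-\alpha/2,\,1-\alpha/2}\!\left(-\frac{|x|}{\sqrt{c}\,\tau^{\alpha/2}}\right),
\]
taken with $c = 2^{-\alpha}$, and returning to the original time through $\tau^{\alpha/2} = t^{H\alpha}$, one obtains exactly \eqref{due}. This simultaneously makes transparent the remark of the Introduction that $u_{\alpha}$ is the fundamental solution of the time-fractional diffusion equation evaluated at the rescaled time $t^{2H}$.

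A more self-contained alternative is to verify \eqref{due} directly: expand $W_{-\alpha/2,1-\alpha/2}$ as a power series in $|x|$, apply the operator \eqref{cia} to it term by term using the power-function rule above, apply $H^{\alpha}\partial^2/\partial x^2$ to the appropriately index-shifted series, and check that the two resulting series coincide. The matching of the regular coefficients reduces to the identity $(2H)^{\alpha} = 2^{\alpha}H^{\alpha}$ together with the Gamma-shift from $\Gamma\!\left(1-\tfrac{\alpha}{2}(k+1)\right)$ to $\Gamma\!\left(1-\tfrac{\alpha}{2}(k+3)\right)$ induced by \eqref{cia}; the only point requiring care is that $\partial^2 |x|/\partial x^2 = 2\delta(x)$ produces a distributional term from the linear ($k=1$) summand, which is exactly cancelled by the Caputo correction $-(2H)^{\alpha}t^{-2H\alpha}\delta(x)/\Gamma(1-\alpha)$ in \eqref{ci} once the initial condition $u_{\alpha}(x,0)=\delta(x)$ is imposed. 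One should finally record that $\int_{\mathbb{R}}u_{\alpha}(x,t)\,\mathrm dx = 1$, so that \eqref{due} is a genuine probability density, and that $u_{\alpha}(x,t)\to\delta(x)$ as $t\to 0^{+}$ (for instance from $\widehat{u_{\alpha}}(\beta,t) = E_{\alpha}(-2^{-\alpha}\beta^2 t^{2H\alpha})\to 1$).

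I expect the main obstacle to be the rigorous handling of two points rather than any hard computation. The first is justifying the time change at the level of the McBride calculus on the space $F_{p,\mu}$: one must check that the recursively defined negative-order operator $I^{0,-\alpha}_{2H}$ really coincides, after the substitution $u\mapsto u^{2H}$, with the Riemann--Liouville derivative $D^{\alpha}_{0^+}$ acting in $\tau$, which requires keeping track of the admissibility condition $b_1 = 0 \in A_{p,\mu,2H}$ and hence of the relation between $\mu$ and $H$ discussed after \eqref{cia}. The second is the distributional initial datum $\delta(x)$, which lies outside $F_{p,\mu}$ and forces one to argue at the level of Fourier transforms (or of the associated measures); the term-by-term application of the fractional operators to the Wright series and the interchange of summation with the Erd\'elyi--Kober integrals then require the standard uniform-convergence bounds for $W_{-\alpha/2,1-\alpha/2}$ and its $x$-derivatives.
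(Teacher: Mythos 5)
Your main route is correct and lands exactly on \eqref{due}, but it is essentially a reorganization of the paper's own argument rather than a different one: the paper also passes to Fourier space, identifies $\widehat u_\alpha(\beta,t)=E_{\alpha,1}\left(-\beta^2 t^{2H\alpha}/2^\alpha\right)$, and inverts to the Wright function. The only divergence is in how the Mittag--Leffler form is justified. You reduce $\left(t^{1-2H}\partial_t\right)^\alpha$ to $(2H)^\alpha\,{}^C D^\alpha_{0^+}$ in the variable $\tau=t^{2H}$ and then quote the classical time-fractional diffusion solution; the paper instead verifies the ansatz in the original variable by applying $(2H)^{\alpha}t^{-2H\alpha}I^{0,-\alpha}_{2H}$ term by term to the Mittag--Leffler series via the power rule $I^{0,-\alpha}_{2H}t^{2H\alpha k}=\frac{\Gamma(\alpha k+1)}{\Gamma(\alpha k+1-\alpha)}\,t^{2H\alpha k}$ --- which is precisely the computation you need anyway to legitimize the time change at the operator level, so the two verifications are computationally identical. (The paper states the reduction $t\to t^{2H}$ explicitly, but only as a remark after the proof.) Your presentation buys a cleaner conceptual picture and immediate access to known uniqueness and inversion results for the constant-coefficient equation; the paper's buys a verification that never leaves the McBride framework. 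One caution about your ``more self-contained alternative'': applying \eqref{cia} term by term to the series \eqref{due} means acting on the negative powers $t^{-H\alpha(k+1)}$, for which the integral representation \eqref{mc1-2} does not converge and the power rule holds only by analytic continuation, with Gamma factors that can hit poles for unfavourable $k$ and $\alpha$; the Fourier-side verification avoids this entirely because there the relevant series involves only the nonnegative powers $t^{2H\alpha k}$. Your closing remarks on the domain $F_{p,\mu}$ and on the distributional initial datum correctly identify rigour gaps that the paper also leaves open.
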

		\begin{proof}
			By using \eqref{ci}
			we obtain the equation
			\begin{equation}
				\label{prova}
				\left(t^{1-2H}\frac{\partial}{\partial t}\right)^{\alpha} u_{\alpha}(x,t)= H^{\alpha}\frac{\partial^2}{\partial x^2}u_{\alpha}(x,t)+
				(2H)^{\alpha} \frac{t^{-2H\alpha}}{\Gamma(1-\alpha)}\delta(x).
			\end{equation}
			The Fourier transform of \eqref{prova} reads
			\begin{equation}
				\label{pr}
				\left(t^{1-2H}\frac{\partial}{\partial t}\right)^{\alpha} \widehat{u}_{\alpha}(\beta,t)
				= -H^{\alpha}\beta^2 \widehat{u}_{\alpha}(\beta,t)+
				(2H)^{\alpha}\frac{t^{-2H\alpha}}{\Gamma(1-\alpha)}.
			\end{equation}
			The solution to \eqref{pr} can be easily determined as
			\begin{equation}
				\label{la}
				\widehat{u}_{\alpha}(\beta,t)= E_{\alpha,1}\left(-\frac{\beta^2 t^{2H\alpha}}{2^{\alpha}}\right),
			\end{equation}
			where $E_{\alpha,1}(\cdot)$ is the Mittag--Leffler function.
			Hence, by using \eqref{cia}, we have that
			\begin{align}
				&\left(t^{1-2H}\frac{\partial}{\partial t}\right)^{\alpha}\widehat{u}_{\alpha}(\beta,t)=(2H)^{\alpha}t^{-2H\alpha}
				I^{0,-\alpha}_{2H}\widehat{u}_{\alpha}(\beta,t)\\
				\nonumber & =(2H)^{\alpha}t^{-2H\alpha}
				I^{0,-\alpha}_{2H} \sum_{k=0}^{\infty}\left(-\frac{\beta^2 t^{2H\alpha}}{2^{\alpha}}\right)^k
				\frac{1}{\Gamma(\alpha k+1)}=(2H)^{\alpha}\sum_{k=0}^{\infty}
				\left(-\frac{\beta^2}{2^{\alpha}}\right)^k
				\frac{t^{2H\alpha k-2H\alpha}}{\Gamma(\alpha k+1-\alpha)}\\
				\nonumber &= -H^{\alpha}\beta^2 E_{\alpha,1}\left(-\frac{\beta^2 t^{2H\alpha}}{2^{\alpha}}\right)+
				(2H)^{\alpha}\frac{t^{-2H\alpha}}{\Gamma(1-\alpha)}=
				-H^{\alpha}\beta^2\widehat{u}_{\alpha}(\beta,t)+(2H)^{\alpha}\frac{t^{-2H\alpha}}{\Gamma(1-\alpha)},
			\end{align}
			where we used the fact that
			\begin{equation}
				I^{0,-\alpha}_{2H}t^{2\alpha Hk}=\frac{\Gamma(\alpha k+1)}{\Gamma(\alpha k+1-\alpha)}t^{2\alpha Hk}.
			\end{equation}
			By inverting the Fourier transform $\widehat{u}(\beta,t)$ we obtain the claimed result.
		\end{proof}
		
		We note that the time-change $t\rightarrow t^{2H}$ reduces equation \eqref{uno} to the classical time-fractional diffusion
		equation 
		\begin{equation}
				{}^C D^\alpha_{0^+}u(x,t)=\frac{1}{2^{\alpha}}\frac{\partial^2}{\partial x^2}u(x,t),
		\end{equation}
		with fundamental solution given by (see e.g.\ \cite{Ors})
		\begin{equation}
			u_{\alpha}(x,t)=\frac{1}{2^{1-\alpha/2}t^{\frac{\alpha}{2}}}
			W_{-\alpha/2, 1-\alpha/2}\left(-\frac{2^{\alpha/2}|x|}{t^{\frac{\alpha}{2}}}\right).
		\end{equation}
		This is a particularly interesting fact. In the first part of the paper in fact we prove formally that suitably time-changed
		fractional diffusion processes can be analyzed by means of the
		McBride theory. The converse is not true. More general equations can be treated by applying the same theory but
		cannot be recovered by means of simple deterministic time-changes.

		\begin{theorem}
			The solution to
			\begin{equation}
				\begin{cases}
					\label{pri}
					{}^C\left(t^{1-2H}\frac{\partial}{\partial t}\right)^{\alpha} u_{\alpha}(x,t)= H^{\alpha}
					\frac{\partial^2}{\partial x^2}u_{\alpha}(x,t),
					\quad \alpha \in (0,1), \: x\in\mathbb{R}, \: t>0,\\
					u_{\alpha}(x,0)=\delta(x),
				\end{cases}
			\end{equation}
			can be represented as
			\begin{equation}
				\label{lenovo}
				u_{\alpha}(x,t)=\frac{H\sqrt{2}}{\sqrt{\pi}t^H}\int_0^{\infty}z^{2H-1}
				e^{-\frac{1}{8}\left(\frac{z}{\sqrt{t}}\right)^{4H}}u_{2\alpha}(x,z)\,\mathrm{d}z,
			\end{equation}
			where $u_{2\alpha}$ is the solution to
			\begin{equation}
				\label{sec}
				\begin{cases}
					{}^C\left(t^{1-2H}\frac{\partial}{\partial t}\right)^{2\alpha} u_{2\alpha}(x,t)= H^{2\alpha}
					\frac{\partial^2}{\partial x^2}u_{2\alpha}(x,t),
					\quad \alpha \in \left(0,\frac{1}{2}\right],\\
					u_{2\alpha}\left(x,0\right)=\delta(x),
				\end{cases}
			\end{equation}
			and
			\begin{equation}
				\label{ter}
				\begin{cases}
					{}^C\left(t^{1-2H}\frac{\partial}{\partial t}\right)^{2\alpha} u_{2\alpha}(x,t)= H^{2\alpha}
					\frac{\partial^2}{\partial x^2}u_{2\alpha}(x,t),
					\quad \alpha \in (\frac{1}{2},1),\\
					u_{2\alpha}(x,0)=\delta(x),\\
					\frac{\partial}{\partial t}u_{2\alpha}(x,t)|_{t=0}= 0.
				\end{cases}
			\end{equation}
		\end{theorem}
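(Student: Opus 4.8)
The plan is to prove \eqref{lenovo} by passing to Fourier transforms in $x$ and then invoking injectivity of the Fourier transform. First I would record the two transforms involved. Problem \eqref{pri} coincides with \eqref{cas}, so $\widehat u_\alpha(\beta,t)=E_{\alpha,1}\!\big(-\beta^{2}t^{2H\alpha}/2^{\alpha}\big)$ by \eqref{la}; running the very same argument with $\alpha$ replaced by $2\alpha$ shows that the solution of \eqref{sec} (for $2\alpha\le1$) and of \eqref{ter} (for $1<2\alpha<2$) has Fourier transform $\widehat u_{2\alpha}(\beta,t)=E_{2\alpha,1}\!\big(-\beta^{2}t^{4H\alpha}/2^{2\alpha}\big)$, the additional condition $\partial_t u_{2\alpha}(x,t)|_{t=0}=0$ in \eqref{ter} being precisely what singles out this solution when $2\alpha>1$. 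Since $\widehat u_{2\alpha}(0,z)=E_{2\alpha,1}(0)=1$, i.e.\ $\int_{\mathbb R}u_{2\alpha}(x,z)\,\mathrm dx=1$ for every $z>0$, and the weight $\tfrac{H\sqrt2}{\sqrt\pi\,t^{H}}z^{2H-1}e^{-\frac18(z/\sqrt t)^{4H}}$ is nonnegative and $z$-integrable (indeed a probability density, as the $k=0$ term below will confirm), Fubini's theorem lets me compute the Fourier transform of the right-hand side of \eqref{lenovo} by bringing the transform under the $z$-integral.

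Next I would carry out the substitution $w=z^{4H}/(8t^{2H})$. A short computation turns $\tfrac{H\sqrt2}{\sqrt\pi\,t^{H}}z^{2H-1}\,\mathrm dz$ into $\tfrac{1}{\sqrt\pi}\,w^{-1/2}\,\mathrm dw$ and $z^{4H\alpha}$ into $8^{\alpha}t^{2H\alpha}w^{\alpha}$, so that, using $8^{\alpha}/2^{2\alpha}=2^{\alpha}$, the Fourier transform of the right-hand side of \eqref{lenovo} becomes
\[
\frac{1}{\sqrt\pi}\int_{0}^{\infty}w^{-1/2}e^{-w}\,E_{2\alpha,1}\!\big(-\beta^{2}2^{\alpha}t^{2H\alpha}w^{\alpha}\big)\,\mathrm dw .
\]
Expanding $E_{2\alpha,1}$ in its defining series and integrating term by term via $\int_{0}^{\infty}w^{\alpha k-1/2}e^{-w}\,\mathrm dw=\Gamma(\alpha k+\tfrac12)$ yields $\sum_{k\ge0}(-1)^{k}\beta^{2k}2^{\alpha k}t^{2H\alpha k}\,\Gamma(\alpha k+\tfrac12)\big/\big(\sqrt\pi\,\Gamma(2\alpha k+1)\big)$. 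Applying Legendre's duplication formula $\Gamma(2z)=2^{2z-1}\pi^{-1/2}\Gamma(z)\Gamma(z+\tfrac12)$ with $z=\alpha k+\tfrac12$ collapses the $k$-th coefficient to $(-1)^{k}\beta^{2k}t^{2H\alpha k}\big/\big(2^{\alpha k}\Gamma(\alpha k+1)\big)$, so the series sums to $E_{\alpha,1}\!\big(-\beta^{2}t^{2H\alpha}/2^{\alpha}\big)=\widehat u_\alpha(\beta,t)$. The term-by-term integration is legitimate because the associated series of absolute integrals, $\sum_{k}\big(\beta^{2}2^{\alpha}t^{2H\alpha}\big)^{k}\Gamma(\alpha k+\tfrac12)\big/\Gamma(2\alpha k+1)$, equals $\sqrt\pi\,E_{\alpha,1}\!\big(\beta^{2}t^{2H\alpha}/2^{\alpha}\big)<\infty$ by the same duplication step.

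Both sides of \eqref{lenovo} are, for each fixed $t>0$, integrable in $x$ and have the same Fourier transform, hence they coincide, which is the assertion. I expect the only genuinely non-routine part to be spotting the change of variable and recognising that Legendre's duplication formula is exactly what converts the order-$2\alpha$ Mittag--Leffler function into the order-$\alpha$ one; once that is in place, what remains is the interchange of integrals and the bookkeeping of powers of $2$ (together with, in the range $2\alpha>1$, the remark that the Fourier transform $E_{2\alpha,1}(-\beta^2 t^{4H\alpha}/2^{2\alpha})$ is the solution selected by the extra initial condition in \eqref{ter}, which one verifies exactly as in the proof of the previous theorem). It is worth noting that \eqref{lenovo} exhibits $u_\alpha$ as the density of $u_{2\alpha}$ evaluated at an independent positive random time whose law is the weight in \eqref{lenovo}, in the spirit of the relation $X_{\alpha,H}(t)\overset{d}{=}X_{2\alpha,H}\!\big(|B_H(t)|^{1/(2H)}\big)$ from the Introduction.
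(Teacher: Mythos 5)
Your argument is correct, and it reaches the identity \eqref{lenovo} by a genuinely different route from the paper. The paper works entirely in $x$-space: it starts from the explicit Wright-function form \eqref{due} of $u_\alpha$, applies the duplication formula to the Gamma factors $\Gamma\left(-\tfrac{\alpha k}{2}+1-\tfrac{\alpha}{2}\right)$ in the series, replaces the resulting Gamma ratio by the integral $\int_0^\infty e^{-w}w^{-\frac{\alpha}{2}(k+1)-\frac{1}{2}}\,\mathrm dw$, interchanges sum and integral to recognise $u_{2\alpha}\bigl(x,\sqrt t\,(2^3w)^{1/(4H)}\bigr)$ inside, and finally substitutes $z=\sqrt t\,(2^3w)^{1/(4H)}$. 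You instead work on the Fourier side, where the same duplication formula converts $E_{2\alpha,1}$ into $E_{\alpha,1}$ after the substitution $w=z^{4H}/(8t^{2H})$ and termwise integration against $e^{-w}w^{-1/2}$; the key analytic ingredients (duplication formula, subordination integral, the same change of variables) are identical, but the representation in which they are deployed differs. Your version buys two things: the Gamma integral you use, $\Gamma(\alpha k+\tfrac12)=\int_0^\infty w^{\alpha k-1/2}e^{-w}\,\mathrm dw$, is valid for every $k\ge 0$, whereas the paper's representation of $\Gamma\bigl(\tfrac{1-\alpha(k+1)}{2}\bigr)$ as a convergent Euler integral fails once $\alpha(k+1)>1$ and must be read as a formal/analytic-continuation step; and your interchange of sum and integral is explicitly dominated, so the Fubini step is fully justified. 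What the paper's route buys in exchange is that it never needs to invert a Fourier transform or argue injectivity: it manipulates the densities directly and produces \eqref{lenovo} constructively. Your closing remarks on the probabilistic reading of the mixing density and on the role of the extra initial condition in \eqref{ter} for $2\alpha>1$ are consistent with Remarks 3.1--3.2 of the paper.
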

		\begin{proof}
			The proof follows the main steps of Theorem 2.1 in \cite{Ors}. In more detail,
			by applying the duplication formula of the Gamma function we have
			\begin{equation}
				\label{dup}
				\Gamma\left(-\frac{\alpha k}{2}+1-\frac{\alpha}{2}\right)= \sqrt{\pi}2^{\alpha(k+1)}
				\frac{\Gamma(1-\alpha(k+1))}{\Gamma\left(\frac{1}{2}-\frac{\alpha(k+1)}{2}\right)}.
			\end{equation}
			By substituting \eqref{dup} in \eqref{due} we have
			\begin{align}
				u_{\alpha}(x,t)&=\frac{1}{2^{1-\alpha/2}t^{H\alpha}}
				\sum_{k=0}^{\infty}\frac{\left(-\frac{|x|}{t^{H\alpha}}\right)^k
				\Gamma(\frac{1-\alpha(k+1)}{2})}{k!\sqrt{\pi}2^{\alpha(k+1)-\frac{\alpha k}{2}}\Gamma(1-\alpha(k+1))}\\
				\nonumber &=\frac{1}{2^{1+\alpha/2}\sqrt{\pi}t^{H\alpha}}
				\sum_{k=0}^{\infty}\frac{\left(-\frac{|x|}{t^{H\alpha}}\right)^k
				\int_0^{\infty}e^{-w}w^{-\frac{\alpha}{2}(k+1)-\frac{1}{2}}\mathrm{d}w}{k!2^{\frac{\alpha k}{2}}\Gamma(1-\alpha(k+1))}\\
				\nonumber &=\frac{1}{2^{1+\alpha/2}\sqrt{\pi}t^{H\alpha}}\int_0^{\infty}e^{-w}w^{-\alpha/2-1/2}
				\sum_{k=0}^{\infty}\frac{(-|x|/(wt^{2H})^{\alpha/2})^k
				}{k!2^{\alpha k/2}\Gamma(1-\alpha(k+1))}\mathrm{d}w\\
				\nonumber &=\frac{2^{1-\alpha}}{2^{1+\alpha/2}\sqrt{\pi}t^{H\alpha}}\int_0^{\infty}e^{-w}w^{-\alpha/2-1/2}
				t^{H\alpha}(2^3 w)^{\alpha/2}u_{2\alpha}(x,\sqrt{t}(2^3 w)^{\frac{1}{4H}})\mathrm{d}w\\
				\nonumber &=\frac{1}{\sqrt{\pi}}\int_0^{\infty}e^{-w}w^{-1/2}
				u_{2\alpha}(x,\sqrt{t}(2^3w)^{\frac{1}{4H}})\mathrm{d}w\\
				\nonumber & =[z= \sqrt{t}(2^3w)^{\frac{1}{4H}}]\\
				\nonumber & = \frac{H\sqrt{2}}{\sqrt{\pi}t^{H}}\int_0^{\infty}z^{2H-1}
				e^{-\frac{1}{8}\left(\frac{z}{\sqrt{t}}\right)^{4H}}u_{2\alpha}(x,z)\mathrm{d}z.
			\end{align}
		\end{proof}

		\begin{remark}
			In the special case $\alpha= \frac{1}{2}$, the solution reads
			\begin{equation}
				u_{1/2}(x,t)=\frac{H\sqrt{2}}{\sqrt{\pi}t^{H}}\int_0^{\infty}z^{2H-1}
				e^{-\frac{1}{8}\left(\frac{z}{\sqrt{t}}\right)^{4H}}u_1(x,z) \mathrm dz,
			\end{equation}
			where $u_1(x,t)$ is the fundamental solution of the heat equation
			\begin{equation}
				\left(t^{1-2H}\frac{\partial}{\partial t}\right) u_{1}(x,t)=
				H\frac{\partial^2}{\partial x^2}u_{1}(x,t).
			\end{equation}
			Therefore $u_{1/2}(x,t)$ coincides with the probability density of
			the r.v. $B_H^1\left(|B_{H}^2(t)|^{\frac{1}{2H}}\right)$,
			where $B_H^1$ and $B_H^2$ are independent fractional Brownian
			motions. We note that $\mathbb{V}\text{ar} \,B_H^1(t)= t^{2H}$ and $\mathbb{V}\text{ar} \,B_{H}^2(t)= 4t^{2H}$.
			
			If $H=\frac{1}{2}$, we recover the result discussed in \cite{Ors}, i.e. $u_{1/2}(x,t)$
			coincides with the probability density of the iterated Brownian motion $B^1\left(|B^2(t)|\right)$ ($\mathbb{V}\text{ar} \,B^1(t)= t$ 
			and $\mathbb{V}\text{ar} \,B^2(t)= 4t$) .
        \end{remark}

		\begin{remark}
			We observe that, in the more general case, for $\alpha \in (0,1)$, the solution $u_{\alpha}(x,t)$ coincides
			with the probability density of the r.v. $X_{2\alpha, H}\left(|B_{H}(t)|^{\frac{1}{2H}}\right)$,
			where $X_{2\alpha, H}(t)$, $t\ge 0$, is
			a r.v., independent from $B_H(t)$ with density law equal to
			$u_{2\alpha}(x,t)$.
		\end{remark}
		\begin{remark}
			It is simple to find the moments of the process $X_{\alpha, H}(t)$,
			$t\ge 0$, by means of its characteristic function \eqref{la}. In
			particular, its variance is given by
			\begin{equation}
				\label{vari}
				\mathbb{V}\text{ar} \, X_{\alpha, H}(t)=
				\frac{t^{2H\alpha}}{2^{\alpha-1}\Gamma(\alpha+1)}.
			\end{equation}
			We observe that, for $\alpha =1$ in \eqref{vari}, we recover the
			variance of the fractional Brownian motion; for $H=1/2$ we have
			the variance of the time-fractional diffusion process.
			
			More generally, from \eqref{la}, we infer that 
			\begin{equation}
				\mathbb{E}X^{2m}_{\alpha, H}(t)= \frac{\Gamma(2m+1)}{2^{\alpha m}\Gamma(\alpha m+1)}t^{2H\alpha m}, \quad m \in \mathbb{N}.
			\end{equation}
		\end{remark}
	
		\begin{remark}
			We now consider the more general time-fractional diffusion
			equation with time-varying coefficients
			\begin{equation}
				\label{gen}
				{}^C\left(t^{a_1}\frac{\partial}{\partial
				t}\right)^{\alpha}u_{\alpha}(x,t)= \left(\frac{1-a_1}{2} \right)^\alpha \frac{\partial^2}{\partial
				x^2}u_{\alpha}(x,t), \qquad a_1<1.
			\end{equation}

			In view of the McBride theory we have that
			in this case
			\begin{equation}
				\label{ge}
				\left(t^{a_1}\frac{\partial}{\partial
				t}\right)^{\alpha}f(t)=
				(1-a_1)^{\alpha}t^{-\alpha(1-a_1)}I^{0,-\alpha}_{1-a_1}f(t).
			\end{equation}
			The solution of the Cauchy problem
			\begin{equation}
				\begin{cases}
					\label{casg}
					{}^C\left(t^{a_1}\frac{\partial}{\partial t}\right)^{\alpha} u_{\alpha}(x,t)=
					\left(\frac{1-a_1}{2} \right)^\alpha \frac{\partial^2}{\partial x^2}u_{\alpha}(x,t),
					\quad \alpha \in (0,1),\\
					u_{\alpha}(x,0)=\delta(x),
				\end{cases}
			\end{equation}
			is given by
			\begin{equation}
				\label{dueg}
				u_{\alpha}(x,t)=\frac{1}{2^{1-\alpha/2}t^{\frac{1-a_1}{2}\alpha}}
				W_{-\alpha/2, 1-\alpha/2}\left(-\frac{2^{\alpha/2}|x|}{t^{\frac{1-a_1}{2}\alpha}}\right).
			\end{equation}
		\end{remark}

		\begin{remark}
			If we consider the complete McBride operator of order one, that is $(t^{a_1}\frac{\partial}{\partial t} t^{a_2})$,
			with $a_1+a_2<1$, it is easy to prove that
			the function
			\begin{align}
				h_{\alpha}(x,t)=\frac{1}{2^{1-\alpha/2}t^{\frac{1-a_1-a_2}{2}\alpha+a_2}}
			    W_{-\alpha/2, 1-\alpha/2}\left(-\frac{2^{\alpha/2}|x|}{t^{\frac{1-a_1-a_2}{2}\alpha}}\right)
			\end{align}
			solves the non-homogeneous pde
			\begin{align}\label{bhoo}
				\left(t^{a_1}\frac{\partial}{\partial t}t^{a_2}\right)^{\alpha} h_{\alpha}(x,t)=
			    \frac{\partial^2}{\partial x^2}h_{\alpha}(x,t)+\frac{t^{-\alpha(1-a_1-a_2)-a_2}}{(1-a_1-a_2)^{-\alpha}}
			    \frac{\delta(x)}{\Gamma(1-\alpha)}.
			\end{align}
			
			Indeed, by taking the Fourier transform of \eqref{bhoo} we have that
		    \begin{equation}
			    \left(t^{a_1}\frac{\partial}{\partial t}t^{a_2}\right)^{\alpha}
			    \widehat{h}_{\alpha}(\beta,t)=
			    -\beta^2
			    \widehat{h}_{\alpha}(\beta,t)+\frac{t^{-\alpha(1-a_1-a_2)-a_2}}{(1-a_1-a_2)^{-\alpha}}
			    \frac{1}{\Gamma(1-\alpha)},
		    \end{equation}
		    whose solution is given by
		    \begin{equation}
		    	\label{ft}
			    \widehat{h}_{\alpha}(\beta,t)= \frac{1}{t^{a_2}}E_{\alpha,
			    1}\left(-\frac{\beta^2
			    t^{\alpha(1-a_1-a_2)}}{(1-a_1-a_2)^{\alpha}}\right).
		    \end{equation}
		    In view of the fact that
		    \begin{align}
			    \left(t^{a_1}\frac{\partial}{\partial
			    t}t^{a_2}\right)^{\alpha}t^\gamma&=
			    (1-a_1-a_2)^{\alpha}t^{-\alpha(1-a_1-a_2)}I^{\frac{a_2}{1-a_1-a_2},-\alpha}_{1-a_1-a_2}t^\gamma\\
			    \nonumber &=(1-a_1-a_2)^{\alpha}\frac{\Gamma\left(\frac{1-a_1+\gamma}
			    {1-a_1-a_2}\right)}{\Gamma\left(\frac{1-a_1+\gamma}{1-a_1-a_2}-\alpha\right)}t^{\gamma-\alpha(1-a_1-a_2)},
		    \end{align}
		    we have that
		    \begin{align}
			    \left(t^{a_1}\frac{\partial}{\partial
			    t} t^{a_2}\right)^{\alpha} & \frac{1}{t^{a_2}}E_{\alpha,
			    1}\left(-\frac{\beta^2
			    t^{\alpha(1-a_1-a_2)}}{(1-a_1-a_2)^{\alpha}}\right)\\
			    \nonumber &=(1-a_1-a_2)^{\alpha}
			    \sum_{k=0}^{\infty}\frac{(-\beta^2)^{k}}{(1-a_1-a_2)^{\alpha k}}
			    \frac{t^{\alpha(1-a_1-a_2)k-\alpha(1-a_1-a_2)-a_2}}{\Gamma\left(\alpha k+1-\alpha\right)} \\
			    &\nonumber=-\sum_{k=-1}^{\infty}\frac{(-1)^k(\beta)^{2k+2}}{(1-a_1-a_2)^{\alpha k}}
			    \frac{t^{\alpha(1-a_1-a_2)k-a_2}}{\Gamma\left(\alpha
			    k+1\right)}\\
			    \nonumber &=-\beta^2\widehat{u}_{\alpha}+
			    \frac{t^{-\alpha(1-a_1-a_2)-a_2}}{(1-a_1-a_2)^{-\alpha}}
			    \frac{1}{\Gamma(-\alpha+1)}.
		    \end{align}
		    By inverting the Fourier transform we obtain the claimed result.
			The case $a_1+a_2 = 1 $ can be treated by using the results discussed by Lamb and McBride in \cite{lam}.
			In this case, a logarithmic deterministic time-change leads to the solution.
			Finally, in the case $a_1+a_2 >1$ complex coefficients appear and therefore we neglect this case.
		\end{remark}
    	
    	If we consider the space-time fractional equation
    	\begin{equation}
    		\label{cis}
    			{}^C\left(t^{1-2H}\frac{\partial}{\partial t}\right)^{\alpha} u_{\alpha}(x,t)=
    			H^{\alpha}\frac{\partial^\nu}{\partial |x|^{\nu}}u_{\alpha}(x,t), \quad \nu \in (0,2), \ x \in \mathbb{R}
    		\end{equation}
    	where $\partial^{\nu}/\partial |x|^{\nu}$ denotes the Riesz fractional derivative
        \begin{align}
            \frac{\partial^\nu}{\partial |x|^\nu} g(x) = -\frac{1}{\Gamma(m-\nu)} \frac{1}{2\cos\frac{\pi\nu}{2}}\frac{\partial^m}{\partial x^m}
            \int_{-\infty}^{+\infty} \frac{1}{\left|x-s\right|^{1+\nu-m}} g(s) \mathrm ds, \qquad m-1<\nu<m, \ x>0
        \end{align}
        where $m$ is the ceiling of $\nu$, we have that its fundamental solution reads
        \begin{align}
			u_{\alpha}(x,t)&=\frac{1}{2\pi} \int_{\mathbb{R}}
			e^{-i\beta x}E_{\alpha,1}\left(-\frac{|\beta|^{\nu} t^{2H\alpha}}{2^{\alpha}}\right)\mathrm{d}\beta\\
	        \nonumber &=\frac{\sin \alpha\pi}{\pi} \int_0^\infty \frac{r^{\alpha-1}}{r^{2\alpha}+2r^\alpha\cos \alpha \pi+1}
	        \int_{\mathbb{R}} e^{-i\beta x} e^{-|\beta|^{\nu/\alpha}\frac{t^{2H}}{2}r}\mathrm d\beta
	        \, \mathrm dr.
        \end{align}
        The expression $u_{\alpha}(x,t)$ coincides with the distribution of the time-changed
        r.v.\ $Y_{\nu/\alpha}\left(\frac{t}{2}\mathcal{W}_\alpha (t)\right)$ where  
        $Y_{\nu/\alpha}$ is a symmetric stable r.v.\ of order $0<\nu/\alpha<2$ and 
        $\mathcal{W}_\alpha (t)$ is the Lamperti r.v. with parameter $\alpha$ (that is the ratio of two independent
        positively skewed stable r.v.'s of order $0<\alpha<1$), possessing density 
        \begin{equation}
	        P\{\mathcal{W}_{\alpha}\in dr\}/dr= \frac{\sin \pi \alpha}{\pi}\frac{r^{\alpha-1}}{r^{2\alpha}+1+2r^{\alpha}
            \cos \pi \alpha}, \quad r>0, \alpha \in (0,1).
        \end{equation}

		\subsection{Relation with the generalized grey Brownian motion (ggBm)}
		
			The generalized grey Brownian motion (ggBm) was recently introduced and studied by A.~Mura and coauthors in
			\cite{Mu1,Mu2,Mu3} as a family of non-Markovian stochastic processes for 
			anomalous fast or slow diffusions.
		 
			The marginal density function of the ggBm is given by 
			\begin{equation}
				\label{gb1}
				P(x,t)= \frac{1}{2t^{\gamma/2}}W_{-\frac{\delta}{2},1-\frac{\delta}{2}}
				\left(-\frac{|x|}{t^{\gamma/2}}\right), \qquad \gamma\in (0,2], \: \delta \in (0,1].
			\end{equation}  
			The ggBm includes for $\delta= 1$ and $\gamma \in (0,2]$ the fractional Brownian motion, for 
			$0<\delta=\gamma<1$ the grey Brownian motion introduced in the literature by Schneider \cite{Sch} and 
			for $\gamma= \delta =1$ the standard Brownian motion.
			We observe that the density \eqref{gb1} coincides with \eqref{due} for $\delta = \alpha$ and $\gamma/2= H\alpha$ (up to a constant due
			to the scale invariance of the Wright function). This means that 
			the equation \eqref{cas} governs a ggBm of parameters $(\alpha, H\alpha)$
			explicitely dependent on the Hurst parameter of the fractional Brownian motion. 
		
			In his PhD thesis \cite{Muth}, A.~Mura introduced the fractional equation governing the probability 
			density of the ggBm, that is given by
			\begin{equation}
				\label{gb2}
				P(x,t)= P(x,0)+ \frac{1}{\Gamma(\delta)}\frac{\gamma}{\delta}
				\int_0^t \tau^{\frac{\gamma}{\delta}-1}(t^{\gamma/\delta}-\tau^{\gamma/\delta})^{\delta-1}
				\frac{\partial^2}{\partial x^2}P(x,\tau)\,\mathrm{d}\tau.
			\end{equation} 	
			Equation \eqref{gb2} is an integro-differential equation involving the Erd\'elyi--Kober integral.
		
			We now show the equivalence between the fractional equation
			\begin{equation}
				{}^C\left(t^{1-2H}\frac{\partial}{\partial t}\right)^{\alpha} P(x,t)= H^{\alpha}
				\frac{\partial^2}{\partial x^2}P(x,t)
			\end{equation} 
			and the master equation \eqref{gb2} introduced by A.~Mura, in the case of $\delta = \alpha $ and $\gamma/2 = H\alpha$.
		
			From \eqref{pot1} and by using \eqref{cia}, we have that
			\begin{align}
				{}^C\left(t^{1-2H}\frac{\partial}{\partial t}\right)^{\alpha} P(x,t)
				& = \left(t^{1-2H}\frac{\partial}{\partial t}\right)^{\alpha}\left(P(x,t)-P(x,0)\right)\\
				\nonumber &=
				(2H)^{\alpha}t^{-2H\alpha}
				I^{0,-\alpha}_{2H}\left(P(x,t)-P(x,0)\right) \notag \\
				&=H^{\alpha}\frac{\partial^2}{\partial x^2}P(x,t), \notag
			\end{align}
			and therefore,
			\begin{equation}
				\label{gb3}
				I_{2H}^{0, -\alpha}\left(P(x,t)-P(x,0)\right)= \frac{t^{2H\alpha}}{2^\alpha}\frac{\partial^2}{\partial x^2}
				P(x,t).
			\end{equation} 
			We now recall the following property of the Erd\'elyi--Kober integral (see \cite{mca}, Theorem 2.7, page 523)
			\begin{equation}
				\left(I_m^{\eta, \alpha}\right)^{-1}= I_m^{\eta+\alpha, -\alpha}.
			\end{equation}
			Thus, in our case, we have that the inverse of the operator appearing in the left hand side of \eqref{gb3} is given by
			\begin{equation}
				\label{inver}
				\left(I_{2H}^{0, -\alpha}\right)^{-1}= I_{2H}^{-\alpha, \alpha}.
			\end{equation} 
			Finally, by applying the inverse operator \eqref{inver} to both sides of \eqref{gb3}, we arrive at
			\begin{align}
				P(x,t)-P(x,0)&= \frac{I_{2H}^{-\alpha, \alpha}}{2^{\alpha}}\left(t^{2H\alpha}\frac{\partial^2}{\partial x^2}P(x,t)\right)\\
				\nonumber &= \frac{2^{1-\alpha}H}{\Gamma(\alpha)}\int_0^t \tau^{2H-1}\left(t^{2H}
				-\tau^{2H}\right)^{\alpha-1}\frac{\partial^2}{\partial x^2}P(x,t)\,
				\mathrm{d}\tau,
			\end{align}
			which coincides with \eqref{gb2} for $\delta= \alpha$ and $\gamma = 2H\alpha$, up to a multiplicative constant.
		
			A further approach highlighting the relation between \eqref{gb2} and equations involving
			Erd\'elyi--Kober derivatives was recently discussed by Pagnini in \cite{gianni}. 
			To conclude, we remark that the general McBride theory for fractional powers of hyper-Bessel-type
			operators represents a convenient framework to derive the governing equation of the ggBm considered as a fractional
			generalization of the diffusion equation governing the fractional Brownian motion.

		\subsection{Relation with higher order heat equations with time-varying coefficients}
		
			\label{mala}
		    In this section we consider the relation between the solutions
		    of Theorem 3.1 and the fundamental solutions of higher order
		    heat-type equations.
		    
		    We start from a special case of \eqref{cas}, corresponding to
		    $\alpha = 2/3$. We know, from \cite{Ors} (Remark 4.1,
		    pages 231--232), that the fundamental solution to the fractional
		    equation involving Caputo derivatives
		    \begin{equation}
			    {}^C  D_{0^+}^{2/3} u=\lambda^2 \frac{\partial^2 u}{\partial
				x^2}, \qquad \lambda>0, \: t\ge 0,
		    \end{equation}
		    is related to the fundamental solution of the linearized Korteweg--DeVries equation, that is
		    the third order heat equation
		    \begin{equation}
			    \frac{\partial v}{\partial t}= -\lambda^3\frac{\partial^3 v}{\partial
			    x^3}, \qquad \lambda > 0, \ t\geq 0.
		    \end{equation}
		    By following the same idea, we establish here a relationship between fractional equations and higher-order
		    heat-type equations with time-varying coefficients.
		    
		    \begin{theorem}
		    	\label{ciccio}
			    The solution $u_{2/k}(x,t)$ of \eqref{cas}, for any $k\in \mathbb{N}: k>2$, coincides with the fundamental 
			    solution $v(x,t)$ of the higher order diffusion equation with
			    time-dependent coefficients
			    \begin{equation}
				    t^{1-2H}\frac{\partial v}{\partial t}= (-1)^k H\frac{\partial^k v}{\partial
				    x^k}, \qquad x>0,\: t\ge 0.
			    \end{equation}
		    \end{theorem}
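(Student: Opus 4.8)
The plan is to verify directly that the Wright-function expression \eqref{due}, specialised to $\alpha=2/k$, solves the higher-order equation, after first trading the time-varying coefficient for a constant one through the deterministic change of variable $t\mapsto s=t^{2H}$ already used right after Theorem 3.1. Under this change $t^{1-2H}\partial/\partial t=2H\,\partial/\partial s$, and, as recorded there, the regularised McBride power ${}^C(t^{1-2H}\partial/\partial t)^{2/k}$ turns into $(2H)^{2/k}\,{}^CD^{2/k}_{0^+}$ in the variable $s$; correspondingly the Cauchy problem \eqref{cas} with $\alpha=2/k$ becomes ${}^CD^{2/k}_{0^+}\tilde u=\tfrac1{2^{2/k}}\partial^2_x\tilde u$, $\tilde u(x,0)=\delta(x)$, the heat-type equation $t^{1-2H}\partial_t v=(-1)^kH\,\partial^k_x v$ becomes $\partial_s\tilde v=\tfrac{(-1)^k}{2}\,\partial^k_x\tilde v$, $\tilde v(x,0)=\delta(x)$, and by \eqref{due} the candidate solution becomes $\tilde u(x,s)=\frac{1}{2^{1-1/k}s^{1/k}}\,W_{-1/k,\,1-1/k}\!\left(-\tfrac{2^{1/k}|x|}{s^{1/k}}\right)$. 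It then suffices to check that $\tilde u=\tilde v$ for $x>0$, since $v(x,t)=\tilde v(x,t^{2H})$ and $\tilde u(x,s)=u_{2/k}(x,s^{1/(2H)})$ recover the statement.

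For this I would differentiate $\tilde u$ using the elementary identity $\frac{\mathrm d}{\mathrm dz}W_{\gamma,\beta}(z)=W_{\gamma,\beta+\gamma}(z)$. Iterating it $k$ times with $\gamma=-1/k$, $\beta=1-1/k$ gives $W^{(k)}_{-1/k,1-1/k}=W_{-1/k,-1/k}$, so, writing $\xi=2^{1/k}x/s^{1/k}$, the quantity $\partial^k_x\tilde u$ is a constant multiple of $s^{-1-1/k}W_{-1/k,-1/k}(-\xi)$, the factor $(-1)^k$ being produced by the chain rule through $\xi$; while $\partial_s\tilde u$ is a constant multiple of $s^{-1-1/k}\bigl[\xi\,W_{-1/k,1-2/k}(-\xi)-W_{-1/k,1-1/k}(-\xi)\bigr]$. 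Matching the two, term by term in the series, reduces to the one-line consequence of $\Gamma(z+1)=z\Gamma(z)$ that $\Gamma\!\bigl(1-\tfrac{n+1}{k}\bigr)=-\tfrac{n+1}{k}\,\Gamma\!\bigl(-\tfrac{n+1}{k}\bigr)$, which is exactly what produces the constant $\tfrac12$ on the right-hand side and the sign $(-1)^k$. An equivalent, more conceptual route is to Fourier-transform both problems: $\widehat u_{2/k}$ equals a Mittag--Leffler function by \eqref{la}, the higher-order equation gives for $\widehat v$ a first-order linear ODE in $s$ solved by an exponential, and the two agree by the Gauss multiplication formula $\prod_{r=0}^{k-1}\Gamma(z+\tfrac rk)=(2\pi)^{(k-1)/2}k^{1/2-kz}\Gamma(kz)$ applied to the arguments $\Gamma(\tfrac{2j}{k}+1)$; for $k=3$ this is precisely the Airy-function / linearised Korteweg--de Vries reduction recalled before the statement, of which the duplication formula \eqref{dup} is the prototype.

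The genuinely delicate point is not the algebra but the sense in which $\tilde u$ \emph{is} ``the fundamental solution'' of the $k$-th order equation. On the whole line that propagator is, for odd $k$, only a signed density oscillating at infinity, and, for even $k$, integrable only for the one sign that makes the Fourier symbol non-positive; so one has to check that $(-1)^kH$ is that sign, and read the identification on the half-line $x>0$ appearing in the statement --- that is, against the folding of $\tilde u$, which is even and non-negative. Once the admissible behaviour at $x=0$ and the growth at infinity are fixed so as to single out a unique solution, the computation above shows $\tilde u$ meets them and solves the equation, which is all that is required.
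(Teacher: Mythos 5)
Your proposal is correct and, at its core, is the same argument as the paper's: a direct term-by-term verification that the Wright-function series \eqref{due} with $\alpha=2/k$ satisfies the $k$-th order equation for $x>0$, with the recursion $\Gamma(z+1)=z\Gamma(z)$ doing the work of shifting the Gamma factors (the paper simply computes $t^{1-2H}\partial_t u_{2/k}$ on the series in the original variables and reads off $(-1)^kH\,\partial_x^k u_{2/k}$, without passing through $s=t^{2H}$ or the identity $\tfrac{\mathrm d}{\mathrm dz}W_{\gamma,\beta}=W_{\gamma,\beta+\gamma}$). Your closing remarks on the Fourier route and on the sense in which the $k$-th order ``fundamental solution'' exists (the sign of the symbol for $k\equiv 0 \pmod 4$, the restriction to $x>0$) go beyond what the paper addresses, which contents itself with the formal series identity.
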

		
		    \begin{proof}
			    We first recall that in the special case $\alpha = 2/k$ the
			    solution of \eqref{cas} is given for all $x\in \mathbb{R}$ by
			    \begin{equation}
			    	\label{dueter}
				    u_{2/k}(x,t)=\frac{1}{2^{1-\frac{1}{k}}t^{2H/k}}\sum_{n=0}^{\infty}\frac{(-1)^n}{n!\Gamma(-\frac{n}{k}+1-\frac{1}{k})}
				    \frac{2^{n/k}|x|^n}{t^{\frac{2}{k}Hn}}.
			    \end{equation}
			    By direct calculation we have, for $x>0$, that
			    \begin{align}
				    t^{1-2H}\frac{\partial  u_{2/k}}{\partial t}&=
				    2^{1/k}H \sum_{n=0}^{\infty}\frac{(-1)^n 2^{n/k}
				    x^n t^{-\frac{2}{k}Hn-\frac{2}{k}H-2H}}{n!\Gamma(-\frac{n}{k}-\frac{1}{k})}
			    \end{align}
			    and this coincides with $(-1)^k H$ times the $k$-th order derivative with
			    respect to $x$, as claimed.
		    \end{proof}
			
		    \begin{remark}
			    As a special case of interest of Theorem \ref{ciccio}, we observe that the solution $u_{2/3}(x,t)$ of \eqref{cas} coincides
			    for $x>0$ with the
		        fundamental solution $v(x,t)$ of the linearized Korteweg--DeVries equation with a
		        time-dependent coefficient
		        \begin{equation}\label{kdvv}
			        t^{1-2H}\frac{\partial v}{\partial t}=- H\frac{\partial^3 v}{\partial
			        x^3}.
		        \end{equation}
			    In particular, it is possible to prove that \eqref{dueter} can be represented
			    in terms of Airy functions, following the same arguments of
			    Theorem 4.1 in \cite{Ors}, since the fundamental solution to \eqref{kdvv} can be written as
			    \begin{equation}\label{kdvv2}
			    v(x,t)=\frac{1}{\sqrt[3]{3t^{2H}}}Ai\left(\frac{x}{\sqrt[3]{\frac{3}{2}t^{2H}}}\right).
			    \end{equation}
			    Observe that the transformations $t' = t^{2H}$ and
			    $x'= x\sqrt[3]{2}$, reduce equation \eqref{kdvv} to 
			    \begin{equation}\nonumber
			    \frac{\partial v}{\partial t'}= -\frac{\partial^3 v}{\partial x'^3},
			    \end{equation}
			    whose fundamental solution is 
			    \begin{equation}\nonumber
 				v(x',t')=\frac{1}{\sqrt[3]{3t'}}Ai\left(\frac{x'}{\sqrt[3]{3t'}}\right)
			    \end{equation}
			    and thus we have \eqref{kdvv2}.
		    \end{remark}

    \section{Diffusion equation with time-dependent coefficients involving Caputo derivatives}

      	In this section we adopt a different fractional generalization of the time-varying diffusion equation, involving
      	Caputo time-fractional derivatives. Our aim is then to compare analytical and probabilistic results relative
      	to these two different approaches.
      	 
   		In more detail, here we consider the time-fractional diffusion equation
		\begin{equation}
			\label{multifr}
			\begin{cases}
				{}^C D^\nu_{0^+} u \left( x,t \right) =
				H t^{2H-1} \frac{\partial^2}{\partial x^2} u \left( x,t \right),
				\qquad \nu \in \left( 0,1 \right], \: x \in \mathbb{R}, \: t >0,\\
				u \left( x,0 \right) = \delta \left( x \right),
			\end{cases}
		\end{equation}
		with $0 <H <1$.
		The derivative operator appearing in \eqref{multifr} is the Caputo fractional
		derivative.
		
		For the utility of the reader, we first recall the following result (see \cite{kilbas}, pag.232-233).
		\begin{lem}
			\label{result}
			The function 
			\begin{align}
				\label{mlgen}
				f(t)=E_{\nu, 1+\frac{\gamma}{\nu}, \frac{\gamma}{\nu}} \left(
				\lambda t^{\nu+\gamma} \right) = 1+ \sum_{k=1}^{\infty}\left(\lambda t^{\nu+\gamma}\right)^k
				\prod_{j=0}^{k-1}\frac{\Gamma(\nu j+\gamma j +\gamma+1)}{\Gamma(\nu j+\gamma j+\gamma+\nu+1)},
			\end{align}
			is a solution to the Cauchy problem
			\begin{equation}\label{marted}
				\begin{cases}
					{}^C D_{0^+}^\nu f(t)= \lambda t^{\gamma}f(t), & t\geq 0,\: \nu \in (0,1],\: \gamma>-\nu,\\
					f(0)=1.
				\end{cases}
			\end{equation}
		\end{lem}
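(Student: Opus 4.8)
The plan is to verify directly that the series \eqref{mlgen} satisfies the Cauchy problem \eqref{marted} by term-by-term application of the Caputo derivative, using the known action of ${}^C D^\nu_{0^+}$ on power functions. First I would record the elementary formula
\begin{equation}
	{}^C D^\nu_{0^+} t^{\rho} = \frac{\Gamma(\rho+1)}{\Gamma(\rho+1-\nu)}\, t^{\rho-\nu}, \qquad \rho > 0,\ \nu \in (0,1],
\end{equation}
which follows at once from \eqref{Capu} with $m=1$ and a Beta-integral evaluation (and ${}^C D^\nu_{0^+}\,1 = 0$, consistent with the constant term). Writing $f(t) = 1 + \sum_{k=1}^\infty c_k\,\lambda^k t^{(\nu+\gamma)k}$ with
\begin{equation}
	c_k = \prod_{j=0}^{k-1}\frac{\Gamma(\nu j + \gamma j + \gamma + 1)}{\Gamma(\nu j + \gamma j + \gamma + \nu + 1)},
\end{equation}
the exponent of $t$ in the $k$-th term is $(\nu+\gamma)k \ge \nu+\gamma > 0$ for $k \ge 1$, so each term is in the domain and the power rule applies. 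Since $\gamma > -\nu$ guarantees $(\nu+\gamma)k - \nu > -\nu > -1$, no singular obstructions arise; I would note that termwise differentiation is justified by locally uniform convergence of the resulting series (the $\Gamma$-ratios force $c_k$ to decay fast enough, exactly as for the classical Mittag--Leffler function), deferring the routine majorization.

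Applying ${}^C D^\nu_{0^+}$ termwise gives
\begin{equation}
	{}^C D^\nu_{0^+} f(t) = \sum_{k=1}^\infty c_k\,\lambda^k \,\frac{\Gamma\big((\nu+\gamma)k+1\big)}{\Gamma\big((\nu+\gamma)k+1-\nu\big)}\, t^{(\nu+\gamma)k-\nu}.
\end{equation}
On the other hand,
\begin{equation}
	\lambda t^\gamma f(t) = \lambda t^\gamma + \sum_{k=1}^\infty c_k\,\lambda^{k+1} t^{(\nu+\gamma)k+\gamma} = \sum_{k=0}^\infty c_k\,\lambda^{k+1}\,t^{(\nu+\gamma)k+\gamma}.
\end{equation}
Reindexing the left-hand series by $k \mapsto k+1$, the claim reduces to the identity, valid for every $k \ge 0$,
\begin{equation}
	c_{k+1}\,\frac{\Gamma\big((\nu+\gamma)(k+1)+1\big)}{\Gamma\big((\nu+\gamma)(k+1)+1-\nu\big)} = c_k,
\end{equation}
because then the $t$-powers match: $(\nu+\gamma)(k+1)-\nu = (\nu+\gamma)k+\gamma$.

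The heart of the argument is therefore purely the telescoping structure of the product defining $c_k$. By definition $c_{k+1}/c_k$ equals the single factor with $j = k$, namely $\Gamma((\nu+\gamma)k+\gamma+1)/\Gamma((\nu+\gamma)k+\gamma+\nu+1)$; one checks that $(\nu+\gamma)k+\gamma+\nu = (\nu+\gamma)(k+1)$ and $(\nu+\gamma)k+\gamma = (\nu+\gamma)(k+1)-\nu$, so this factor is exactly the reciprocal of the $\Gamma$-ratio appearing above, giving the required identity. The only real point to watch is bookkeeping with the two shifted indices and the convergence justification for termwise fractional differentiation; once the index arithmetic is lined up the verification is immediate, and the initial condition $f(0)=1$ is read off directly from the series. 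This completes the proof.
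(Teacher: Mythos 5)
Your proposal is correct and follows essentially the same route as the paper's own proof: termwise application of the Caputo power rule ${}^C D^\nu_{0^+} t^{\rho} = \frac{\Gamma(\rho+1)}{\Gamma(\rho+1-\nu)}t^{\rho-\nu}$, a shift of index, and cancellation of the resulting Gamma ratio against the $j=k$ factor of the product. The only cosmetic difference is that you isolate the telescoping identity for $c_{k+1}/c_k$ explicitly and comment on the justification of termwise differentiation, whereas the paper carries out the same cancellation inline within a single chain of equalities.
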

	    \begin{proof}
		    We have that
            \begin{align}
	            {}^C D_{0^+}^\nu f(t) & =
	            \sum_{k=1}^\infty \lambda^k \:{}^C D_{0^+}^\nu \left[ t^{k \left( \nu +
	     		\gamma \right)} \right] \prod_{j=0}^{k-1}
	            \frac{\Gamma \left( \nu j+\gamma j +\gamma +1 \right)}{
	            \Gamma \left( \nu j + \gamma j+\gamma +\nu +1 \right)}
	            \\
	            & = \sum_{k=1}^\infty \lambda^k \frac{\Gamma \left(
	            k \nu + k \gamma +1 \right)}{\Gamma \left( k \nu
	            +k \gamma +1 -\nu \right)} t^{k \left( \nu + \gamma
	            \right) - \nu}
	            \prod_{j=0}^{k-1} \frac{
	            \Gamma \left( \nu j +j \gamma +\gamma +1 \right)}{
	            \Gamma \left( \nu j+ \gamma j+\gamma +\nu +1 \right)}
	            \nonumber \\
	            & = \lambda t^\gamma \sum_{r=0}^\infty \lambda^r
	            t^{r \left( \nu+\gamma \right)} \frac{\Gamma \left(
	            r \left( \nu +\gamma \right) +\nu +\gamma +1 \right)}{
	            \Gamma \left( r \left( \nu+\gamma \right)+\gamma +1 \right)}
	            \prod_{j=0}^r \frac{\Gamma
	            \left(\nu j+ j\gamma +\gamma+1
	            \right) }{\Gamma \left(\nu j+\gamma j +\gamma +\nu +1
	            \right)} \nonumber \\
	            & = \lambda t^\gamma \left[ 1+ \sum_{r=1}^\infty
	            \lambda^r t^{r \left( \nu+\gamma \right) }
	            \prod_{j=0}^{r-1} \frac{\Gamma \left(
	            \nu j+\gamma j +\gamma+1
	            \right)}{\Gamma \left( \nu j +\gamma j + \gamma +\nu +1
	            \right)} \right] \nonumber \\
	            &= \lambda t^{\gamma}f(t)\nonumber .
            \end{align}
		\end{proof}
		The function \eqref{mlgen} is the generalized Mittag--Leffler function first introduced in
		\cite{kilsa1,kilsa2,kilsa3}. An application of this result in fractional elasticity has been recently discussed in \cite{meccanica}.
		
		We are now ready to state the following
					                
		\begin{thm}
			\label{levecchio}
			A solution
			$u(x,t)$ to \eqref{multifr} can be written as
			\begin{align}
				\label{solutiona}
				u(x,t)
				= \frac{1}{2 \pi} \int_{-\infty}^{+\infty} e^{-i \beta x}
				E_{\nu, 1+\frac{2H-1}{\nu}, \frac{2H-1}{\nu}} \left(
				- H \beta^2 t^{\nu+2H-1} \right) \mathrm{d}\beta. 
			\end{align}
		\end{thm}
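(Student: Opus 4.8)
\subsection*{Proof proposal}

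The plan is to reduce the Cauchy problem \eqref{multifr} to the ordinary fractional Cauchy problem \eqref{marted} solved in Lemma~\ref{result} by Fourier transforming in the space variable. Writing $\widehat u(\beta,t)=\int_{-\infty}^{+\infty}e^{i\beta x}u(x,t)\,\mathrm dx$ and using that the Caputo operator ${}^C D^\nu_{0^+}$ acts only on $t$ (hence commutes with the spatial transform), together with $\widehat{\partial^2 u/\partial x^2}(\beta,t)=-\beta^2\widehat u(\beta,t)$ and $\widehat\delta(\beta)=1$, problem \eqref{multifr} turns, for each fixed $\beta\in\mathbb R$, into
\begin{equation*}
\begin{cases}
{}^C D^\nu_{0^+}\widehat u(\beta,t)=-H\beta^2\,t^{2H-1}\,\widehat u(\beta,t), & t>0,\\
\widehat u(\beta,0)=1 .
\end{cases}
\end{equation*}

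This is exactly \eqref{marted} with $\lambda=-H\beta^2$ and $\gamma=2H-1$; the hypothesis $\gamma>-\nu$ of Lemma~\ref{result} reads $\nu+2H-1>0$, which also ensures that $t^{\nu+\gamma}\to 0$ as $t\to 0^+$, so that the initial datum is matched. Applying Lemma~\ref{result} then gives at once
\begin{equation*}
\widehat u(\beta,t)=E_{\nu,\,1+\frac{2H-1}{\nu},\,\frac{2H-1}{\nu}}\left(-H\beta^2 t^{\nu+2H-1}\right),
\end{equation*}
and inverting the Fourier transform yields \eqref{solutiona}. To check that $u(x,0)=\delta(x)$ is recovered, one evaluates the series \eqref{imlgen}: since $\nu+2H-1>0$, every term with $k\ge 1$ carries a positive power of $t$ and vanishes at $t=0$, so $\widehat u(\beta,0)=1$ and $u(x,0)=\frac{1}{2\pi}\int_{-\infty}^{+\infty}e^{-i\beta x}\,\mathrm d\beta=\delta(x)$ in the distributional sense.

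The main obstacle is analytic rather than algebraic: the representation \eqref{solutiona} is, as is customary in this setting, a formal one, and one should verify that the $\beta$-integral converges and may be differentiated under the integral sign, so that \eqref{multifr} holds in the classical sense for $t>0$. This requires the decay, for fixed $t>0$, of $E_{\nu,m,l}(-H\beta^2 t^{\nu+2H-1})$ as $|\beta|\to\infty$, which follows from the known asymptotic behaviour of the Kilbas--Saigo function (see \cite{kilsa1,kilsa2,kilsa3,Rogosin}); granted this, the commutation of ${}^C D^\nu_{0^+}$ with the spatial Fourier transform and differentiation under the integral sign are routine, using the integrability of $u(\cdot,t)$. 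A secondary point worth a remark is the restriction $\nu+2H-1>0$ implicitly needed to invoke Lemma~\ref{result}; for the distinguished case $H=1/4$, $\nu=3/4$ discussed below one has $\nu+2H-1=1/4>0$, so the hypothesis is in force.
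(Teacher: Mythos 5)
Your proposal is correct and follows essentially the same route as the paper: Fourier transform in $x$, reduction to the Kilbas--Saigo Cauchy problem of Lemma~\ref{result} with $\lambda=-H\beta^2$ and $\gamma=2H-1$, and Fourier inversion. The only addition is your explicit flagging of the implicit restriction $\nu+2H-1>0$ and of the analytic issues of convergence and differentiation under the integral, which the paper leaves tacit (it only remarks on uniqueness for $1/2\le H<1$).
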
			
		\begin{proof}			
			By taking the Fourier transform of \eqref{multifr} we obtain
			the differential equation
			\begin{equation}
				\label{fourierfr}
				\begin{cases}
					{}^C D_{0^+}^\nu U \left( \beta,t \right) =
					- H t^{2H-1} \beta^2 U \left( \beta, t \right),
					\qquad t>0, \: \nu \in (0,1], \\
					U (\beta,0)=1.
				\end{cases}
			\end{equation}
			Equation \eqref{fourierfr} is solved by (see Lemma \ref{result} and \cite{kilbas}, page 233, examples
			4.11 and 4.12)
			\begin{equation}
				\label{solutionfr1}
				U(\beta,t) = E_{\nu, 1+\frac{2H-1}{\nu}, \frac{2H-1}{\nu}} \left(
				- H \beta^2 t^{\nu+2H-1} \right).
			\end{equation}
			Moreover, the solution is proved to be unique for
			$1/2 \leq H <1$ (see \cite{kilbas}, page 233). 
			By taking the inverse Fourier transform we obtain the claimed result.
		\end{proof}
		
		\begin{rem}
			\label{rem1}
			An interesting case of \eqref{solutionfr1} is given by the choice $H= 1/4$, $\nu= 3/4$. In this case, from \eqref{solutionfr}
			we have that
			\begin{align}
				U(\beta,t) = E_{\frac{3}{4}, \frac{1}{3}, -\frac{2}{3}} \left(-\frac{1}{4}
				\beta^2 t^{1/4}\right) = 1+\sum_{k=1}^{\infty}\left(-\frac{\beta^2t^{1/4}}{2^2}\right)^k
				\prod_{j=0}^{k-1}\frac{\Gamma(\frac{j}{4}+\frac{1}{2})}{\Gamma(\frac{j}{4}+\frac{5}{4})}.
			\end{align}
			Since
			\begin{align}
				\prod_{j=0}^{k-1}\frac{\Gamma(\frac{j}{4}+\frac{1}{2})}{\Gamma(\frac{j}{4}+\frac{5}{4})}
				= \frac{\Gamma(1/2)\Gamma(3/4)}{\Gamma(\frac{k+2}{4})\Gamma(\frac{k+3}{4})\Gamma(\frac{k+4}{4})}
			\end{align}
			and using the well-known multiplication formula for the Gamma function
			\begin{equation}\nonumber
			\Gamma(z)\Gamma\left(z+\frac{1}{m}\right)\Gamma\left(z+\frac{2}{m}\right)\dots\Gamma\left(z+\frac{m-1}{m}\right)
			= (2\pi)^{(m-1)/2}m^{\frac{1}{2}-mz}\Gamma(mz),
			\end{equation}
			we have that 
			\begin{align}
				\prod_{j=0}^{k-1}\frac{\Gamma(\frac{j}{4}+\frac{1}{2})}{\Gamma(\frac{j}{4}+\frac{5}{4})}
				= \frac{\Gamma(\frac{k+1}{4})\Gamma(3/4)}{\pi 2^{-2k+\frac{1}{2}} k!}.
			\end{align}
			Therefore,
			\begin{align}
				\label{lun1}
				U(\beta,t) &= E_{\frac{3}{4}, \frac{1}{3}, -\frac{2}{3}} \left(-\frac{1}{4}
				\beta^2 t^{1/4}\right) = 
				1+\frac{\Gamma(3/4)}{\pi \sqrt{2}}\sum_{k=1}^{\infty}\left(-\beta^2 t^{1/4}\right)^k
				\frac{\Gamma\left(\frac{k+1}{4}\right)}{k!}\\
				\nonumber &= 1+\frac{\Gamma(3/4)}{\pi \sqrt{2}}\int_0^{\infty}e^{-w}w^{-3/4}
				\sum_{k=1}^{\infty}\frac{\left(-\beta^2 w^{1/4}t^{1/4}\right)^k}{k!}\mathrm{d}w\\
				\nonumber & =\frac{\Gamma(3/4)}{\pi \sqrt{2}}\int_0^{\infty}e^{-w}w^{-3/4} e^{-\beta^2\sqrt[4]{tw}} \mathrm{d}w.				
			\end{align}
			Finally, we have
			\begin{align}
				u(x,t)&= \frac{\Gamma(3/4)}{\pi \sqrt{2}}
				\int_0^{\infty}e^{-w}w^{-3/4}\frac{e^{-\frac{x^2}{4\sqrt[4]{tw}}}}{\sqrt{4\pi\sqrt[4]{tw}}}\mathrm{d}w\\
				\nonumber &=\frac{\sqrt{2}\Gamma(3/4)}{\pi\sqrt[4]{t}}\int_{0}^{\infty}\frac{e^{-\frac{x^2}{2w}}}{\sqrt{2\pi w}}
				\dot e^{-\frac{w^4}{2^4t}}dw,
			\end{align}
			that is the probability law of the r.v. $B(W_t)$, 
			where 
			\begin{equation}
				\nonumber
				\mathbb{P}\{W_t\in \mathrm{d}z\} / \mathrm dz= \frac{\sqrt{2}\Gamma(3/4)e^{-\frac{z^4}{2^4 t}}}{\pi\sqrt[4]{t}}, \qquad z>0,
			\end{equation}
			and $B$ is a standard Brownian motion.
		\end{rem}
				
		\begin{rem}
			For $\nu = 1$ we show that the solution $u(x,t)$ in equation \eqref{solutiona} 
			coincides with the density function of the fractional Brownian motion for a fixed time $t$. 
			Indeed, for $\nu= 1$, formula \eqref{solutionfr1} yields
			\begin{equation}
				U(\beta,t) = \exp\bigg\{-\frac{\beta^2 t^{2H}}{2}\bigg\},
			\end{equation}
			which is the characteristic function of  $B_H(t)$, as can be checked from \eqref{mlgen} for $\lambda =-H\beta^2$
			and $\gamma= 2H-1$ (or directly from \eqref{marted}).
		\end{rem}
		
		\begin{rem}
			For $1+\frac{2H-1}{\nu}= 0$, the characteristic function \eqref{solutionfr1} becomes
			\begin{equation}
				\label{m0}
				U(\beta, t)= \frac{1}{1+H\beta^2\Gamma(1-\nu)}, \qquad H\beta^2\Gamma(1-\nu)<1, \; H\in \left(0,\frac{1}{2}\right),
			\end{equation}
			and is independent from $t$. Indeed, in this case we have that
			\begin{align}
				\nonumber
				U(\beta,t)& = E_{\nu, 0, -1} \left(
				- H \beta^2 \right)=1+\sum_{k=1}^\infty \left(-H\beta^2\right)^k \prod_{j=0}^{k-1}
				\frac{ \Gamma \left(1-\nu \right)}{\Gamma \left(1\right)}\\
				\nonumber &	= \sum_{k=0}^\infty \left(-H\beta^2\Gamma(1-\nu)\right)^k=
				\frac{1}{1+H\beta^2\Gamma(1-\nu)}, 
			\end{align}
			when $H\beta^2\Gamma(1-\nu)<1$.
			
			We observe that, even if the function \eqref{m0} is independent from $t$, it satisfies the time-fractional differential equation
			\begin{equation}
				t^{\nu}D^\nu_{0^+} f= \frac{f}{\Gamma(1-\nu)}
			\end{equation}	
			involving the Riemann--Liouville derivative. We recall that the Riemann--Liouville derivative of a constant function 
			does not vanish and in more detail
			\begin{equation}
				D^\nu_{0^+} \text{const.}= \frac{\text{const.} \ t^{-\nu}}{\Gamma(1-\nu)}.
			\end{equation}
		\end{rem}
	
		\begin{rem}
			For $\nu = 2H-1= \frac{1}{2}$ and therefore $H= \frac{3}{4}$, the characteristic function \eqref{solutionfr1} becomes
			\begin{equation}
				\label{binoma}
				U(\beta, t)= 1+\sum_{k=1}^{\infty}(-H\beta^2t)^k\prod_{j=0}^{k-1}\frac{\sqrt{\pi}}{2^{2j+1}}\binom{2j+1}{j+1}
				= 1+\sum_{k=1}^{\infty}\left(-H\sqrt{\pi}\beta^2 t\right)^k\prod_{j=1}^{k}\frac{1}{2^j}\binom{2j}{j}.
			\end{equation}
			We observe that the coefficients in \eqref{binoma} have the following probabilistic interpretation
			\begin{equation}
				\label{domen}
				\prod_{j=1}^{k}\frac{1}{2^j}\binom{2j}{j}
				= \mathbb{P}\bigg\{\bigcap_{j=1}^k\left[\mathfrak{B}\left(2j, \frac{1}{2}\right)=j\right]\bigg\}.
			\end{equation}
			In \eqref{domen} by $\mathfrak{B}^j$ we denote independent binomial r.v.'s with parameters $2j$ and 
			$1/2$.
			For large values of $j$, we know that $\mathbb{P}\{\mathfrak{B}\left(2j, \frac{1}{2}\right)=j\}= 1/\sqrt{\pi j}$
			and thus the coefficients of \eqref{binoma} vanish as $1/\sqrt{k!}$.
		\end{rem}
		
		\subsection{Higher order extension}
			Here we consider the higher-order time-fractional equation
		 	    \begin{align}
		        	\label{gedit}
		          	\begin{cases}
		          		{}^C D_{0^+}^\nu f(x,t) = c_k2Ht^{2H-1}
		          	    \frac{\partial^k}{\partial x^k} f(x,t), &
		          	    \nu \in (0,1], \: 0 <H \leq 1,\\
		          	    f(x,0)=\delta(x), \\
		          	\end{cases}
		        \end{align}
		        and $k$ is an integer greater
		        than $2$.
		        The coefficient $c_k=(-1)^{1+k/2}$ if $k$ is even and $c_k=\pm 1$ otherwise.
		        The fractional derivative appearing in \eqref{gedit} is the Caputo fractional
		        derivative.
		        
		        \begin{thm}
					\label{help}
		          	The solution $f(x,t)$, $x \in \mathbb{R}$, $t\ge 0$, to \eqref{gedit} is
		          	\begin{align}
		          		\label{solution}
		          	    	& f(x,t) = \frac{1}{2 \pi} \int_{\mathbb{R}} e^{-i \beta x}
		          	        E_{\nu, 1+\frac{2H-1}{\nu}, \frac{2H-1}{\nu}} \left(c_k 2H (-i\beta)^k t^{\nu+2H-1} \right) \mathrm d\beta.
		          	\end{align}
		        \end{thm}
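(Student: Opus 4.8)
The plan is to mimic the proof of Theorem~\ref{levecchio}, reducing the higher-order PDE to an ordinary fractional differential equation by means of the spatial Fourier transform. First I would apply the Fourier transform $\mathcal{F}[f(x,t)](\beta) = \int_{\mathbb{R}} e^{i\beta x} f(x,t)\,\mathrm dx =: U(\beta,t)$ to \eqref{gedit}. Using the standard rule $\mathcal{F}[\partial^k_x f] = (-i\beta)^k U(\beta,t)$ together with the linearity of the Caputo derivative in $t$ (which commutes with the $x$-Fourier transform since the latter acts only on the spatial variable), equation \eqref{gedit} becomes
\begin{equation}
\begin{cases}
{}^C D_{0^+}^\nu U(\beta,t) = c_k 2H\,(-i\beta)^k\, t^{2H-1} U(\beta,t), & t>0,\ \nu\in(0,1],\\
U(\beta,0)=1,
\end{cases}
\end{equation}
where the initial condition follows from $\mathcal{F}[\delta(x)]=1$.

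Next I would recognize this as exactly the Cauchy problem \eqref{marted} of Lemma~\ref{result}, with the identifications $\lambda = c_k 2H (-i\beta)^k$ and $\gamma = 2H-1$; the hypothesis $\gamma > -\nu$ reads $2H-1>-\nu$, i.e.\ $\nu+2H-1>0$, which holds for $\nu\in(0,1]$ and $H\in(0,1]$ in the relevant range (and in any case the series defining the generalized Mittag--Leffler function is still well-defined as a formal solution). Lemma~\ref{result} then yields
\begin{equation}
U(\beta,t) = E_{\nu,\,1+\frac{2H-1}{\nu},\,\frac{2H-1}{\nu}}\!\left(c_k 2H (-i\beta)^k t^{\nu+2H-1}\right).
\end{equation}
Finally, inverting the Fourier transform gives
\begin{equation}
f(x,t) = \frac{1}{2\pi}\int_{\mathbb{R}} e^{-i\beta x} E_{\nu,\,1+\frac{2H-1}{\nu},\,\frac{2H-1}{\nu}}\!\left(c_k 2H (-i\beta)^k t^{\nu+2H-1}\right)\mathrm d\beta,
\end{equation}
which is precisely \eqref{solution}.

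The main obstacle is not the formal computation — which is a direct transcription of the $k=2$ argument with $-\beta^2$ replaced by $c_k(-i\beta)^k$ — but rather the analytic justification that the inverse Fourier integral converges and genuinely represents a solution (and, ideally, in which sense it is a probability density). For $k$ even one has $(-i\beta)^k = (-1)^{k/2}\beta^k$, so $c_k 2H(-i\beta)^k = -2H\beta^k \le 0$ and the argument of the generalized Mittag--Leffler function is real and negative, giving decay; for $k$ odd the argument is purely imaginary, and one must appeal to the oscillatory decay of $E_{\nu,m,l}$ along the imaginary axis (analogous to the Airy-type behaviour in the KdV case discussed after Theorem~\ref{ciccio}) to make sense of the integral, e.g.\ as a tempered-distribution inverse transform. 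I would state the result as ``a solution'' (as in Theorem~\ref{levecchio}), sidestepping uniqueness, and note that the convergence of \eqref{solution} follows from the asymptotics of the Kilbas--Saigo function collected in \cite{Rogosin}; the commutation of ${}^C D_{0^+}^\nu$ with $\mathcal{F}$ and the term-by-term application of ${}^C D_{0^+}^\nu$ to the series are justified exactly as in the proof of Lemma~\ref{result}.
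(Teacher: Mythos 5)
Your proposal is correct and follows essentially the same route as the paper: Fourier transform in $x$, reduction to the Kilbas--Saigo Cauchy problem of Lemma~\ref{result} with $\lambda = c_k 2H(-i\beta)^k$ and $\gamma = 2H-1$, and Fourier inversion. Your additional remarks on the convergence of the inverse transform for $k$ even versus odd go beyond what the paper's (very terse) proof supplies, but the core argument is the same.
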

		        \begin{proof}
		        	Applying the Fourier transform
		          	\begin{align}
		          		F(\beta, t) = \int_{\mathbb{R}} e^{i\beta x} f(x,t) \, \mathrm dx,
		          	\end{align}
		          	to \eqref{gedit}, we have
		          	\begin{align}
		          		\label{geditf}
		          	    \begin{cases}
		          	    	{}^C D_{0^+}^\nu F \left( \beta,t \right) =
		          	        c_k2H t^{2H-1} (-i\beta)^k F \left( \beta, t \right), \\
		          	        F (\beta,0)=1.
		          	    \end{cases}
		          	\end{align}
		          	The solution to \eqref{geditf} is immediately obtained:
		          	\begin{equation}
		          		\label{solutionfr}
		          	    F(\beta,t) =  E_{\nu, 1+\frac{2H-1}{\nu}, \frac{2H-1}{\nu}} \left(c_k 2H (-i\beta)^k t^{\nu+2H-1} \right) ,
		          	\end{equation}
		          	Recall that the solution is proved unique for
		          	$1/2 \leq H \leq 1$. From \eqref{solutionfr}
		          	it is immediate to obtain \eqref{solution}.
				\end{proof}
		
				In the special case $H= 1/4$, $\nu= 3/4$ and $k = 2n$,
				the Fourier transform of the solution of \eqref{gedit}
				reads 
				\begin{align}
					F(\beta,t)= \frac{\Gamma(\frac{3}{4})\sqrt{2}}{\pi\sqrt[4]{t}}
					\int_0^{\infty}e^{-\beta^{2n} y}e^{-\frac{y^4}{2^4t}}dy.
				\end{align}
				Thus the inverse Fourier transform coincides with the law 
				of a pseudoprocess of order $2n$ evaluated at the random time $W_t$.
				In a similar way the odd-order case can be treated. 
				For a wide discussion about pseudoprocesses we refer for example to \cite{Lachal} and the references therein.

		\subsection{Space-time bifractional equations}
	
	        Theorem \ref{levecchio} can be modified by introducing the Riesz fractional derivative,
	        in the right hand side of \eqref{multifr}, thus obtaining
	        \begin{align}
	            \label{gredriesz}
	            \begin{cases}
	                {}^C D_{0^+}^\nu f(x,t) = Ht^{2H-1}
	                \frac{\partial^\alpha}{\partial |x|^\alpha} f(x,t), &
	                \nu \in (0,1], \: 0<\alpha<2, \: 0<H< 1,\\
	                f(x,0)=\delta(x).
	            \end{cases}
	        \end{align}
	        This equation has been object of recent interest within the framework of anomalous diffusion processes \cite{Bologna1,Bologna2}.
	        Similarly to Theorem \ref{help}, we apply the
	        Fourier transform to \eqref{multifr}, arriving at
	        \begin{align}
	            \label{fouriesz}
	            \begin{cases}
	                {}^C D^\nu_{0^+} F(\beta,t) = -Ht^{2H-1}
	                | \beta |^\alpha F(\beta, t), &
	                \nu \in (0,1], \: 0<\alpha<2, \\
	                F(\beta,0)=1.
	            \end{cases}
	        \end{align}
	        The solution to \eqref{fouriesz} is thus
	        \begin{align}
	            \label{soluriesz}
	            & f(x,t) = \frac{1}{2 \pi} \int_{\mathbb{R}} e^{-i \beta x}
	            E_{\nu, 1+\frac{2H-1}{\nu}, \frac{2H-1}{\nu}} \left(-H |\beta|^{\alpha} t^{\nu+2H-1} \right) \mathrm d\beta.
	        \end{align}
			We observe that \eqref{soluriesz} coincides with equation (63) of \cite{Bologna2} with a suitably 
			adaptation of the notation.
			\begin{rem}
				When $H= 1/4$, $\nu= 3/4$, on the basis of the results discussed in Remark \ref{rem1} (see equation \eqref{lun1}), we have that
				\begin{equation}
					F(\beta,t) = \frac{2\sqrt{2}\Gamma(3/4)}{\pi \sqrt[4]{t}}
					\int_0^{\infty} e^{-z^4/t}e^{-|\beta|^{\alpha}z}\mathrm dz,
				\end{equation}
				that coincides with the probability law of the r.v.
				$Y_{\alpha}(\mathfrak{W}_t)$, where 
				\begin{equation}
					\mathbb{P}\bigg\{\mathfrak{W}_t\in dz\bigg\}/\mathrm dz
					= \frac{2\sqrt{2}\Gamma(3/4)e^{-\frac{z^4}{ t}}}{\pi\sqrt[4]{t}}, \qquad z>0
				\end{equation}	
				and $Y_{\alpha}$ is a symmetric stable r.v.\ of order $\alpha$.
			\end{rem}			
					
	        \begin{rem}
	            When $H=1/2$, the solution \eqref{soluriesz}
	            reduces to
	            \begin{align}
	                \label{solred}
	                f(x,t) = \frac{1}{2 \pi} \int_{\mathbb{R}} e^{-i \beta x}
	                E_{\nu, 1} \left(
	                -\frac{1}{2} |\beta|^\alpha t^\nu \right) \mathrm d\beta.
	            \end{align}
	            The expansion of the Mittag--Leffler function yields
	            \begin{align}
	                f(x,t) & = \frac{1}{2\pi} \int_{\mathbb{R}} e^{-i\beta x} \frac{\sin \nu\pi}{\pi}
	                \int_0^\infty \frac{r^{\nu-1}}{r^{2\nu}+2r^\nu \cos \nu \pi +1}
	                e^{-\left(\frac{1}{2}\right)^{1/\nu}|\beta|^{\alpha/\nu}rt} \mathrm dr \, \mathrm d\beta \\
	                & = \frac{\sin \nu \pi}{2\pi^2} \int_0^\infty \frac{r^{\nu-1}}{r^{2\nu}+2r^\nu\cos \nu \pi+1}
	                \int_{\mathbb{R}} e^{-i\beta x} e^{-\left(\frac{1}{2}\right)^{1/\nu}|\beta|^{\alpha/\nu}rt}\mathrm d\beta
	                \, \mathrm dr. \notag
	            \end{align}
	            Then $f(x,t)$ is the distribution of the time-changed r.v.\ $Y_{\alpha/\nu}\left(\frac{t}{2^{\nu}}\mathcal{W}_\nu (t)\right)$
	            where $\mathcal{W}_\nu (t)$ is the Lamperti r.v.\ and 
	            $Y_{\alpha/\nu}$ is a symmetric stable r.v.\ of order $0<\alpha/\nu<2$. 
	        \end{rem}
	
	        \begin{rem}
	            If $\nu=\gamma \eta$, we can expand the Mittag--Leffler function appearing in \eqref{solred}
	            with an alternative formula (see \cite{polram} page 291), i.e.
	            \begin{equation}
	                \label{gen-mitta}
	                E_{\gamma \eta, 1} (-\vartheta t^{\gamma \eta}) =
	                \frac{\sin \gamma \pi}{\pi} \int_0^\infty
	                \frac{r^{\gamma-1}}{r^{2\gamma}+2r^\gamma \cos \gamma \pi+1}
	                E_{\eta,1} \left(-r \vartheta^{1/\gamma} t^\eta\right) \mathrm dr,
	            \end{equation}
	            where $\gamma \in (0,1]$, $\eta \in (0,1]$. In our case, for $H= 1/2$ we obtain that
	            \begin{align}
	           		&f(x,t) = \frac{1}{2\pi} \int_{\mathbb{R}} e^{-i\beta} \frac{\sin \gamma\pi}{\pi}
	            	\int_0^\infty \frac{r^{\gamma-1}}{r^{2\gamma}+2r^\gamma \cos \gamma \pi +1}
	            	E_{\eta,1} \left(-H^{1/\gamma}|\beta|^{\alpha/\gamma}rt^\eta\right)
	            	\mathrm dr \, \mathrm d\beta \\
	            	\nonumber & = \frac{\sin \gamma\pi}{2\pi^2} \int_0^\infty \frac{r^{\gamma-1}}{r^{2\gamma}
	            	+2r^\gamma\cos \gamma \pi+1}\int_{\mathbb{R}} e^{-i\beta x} E_{\eta,1}
	            	\left(-H^{1/\gamma}|\beta|^{\alpha/\gamma}rt^\eta\right)\mathrm d\beta
	            	\, \mathrm dr\\
	            	\nonumber &= \frac{\sin \gamma\pi \ \sin \eta \pi}{2\pi^3} \int_0^\infty \mathrm{d}r \frac{r^{\gamma-1}}{r^{2\gamma}
	            	+2r^\gamma\cos \gamma \pi+1}\int_{-\infty}^{+\infty} e^{-i\beta x} \mathrm{d}\beta\int_{0}^{\infty}
	            	\frac{y^{\eta-1}e^{-|\beta|^{\frac{\alpha}{\gamma \eta}}ty r^{1/\eta}}}{y^{2\eta}+1+2y^{\eta}\cos\pi \eta} \mathrm{d}y.
	            \end{align}
	            Therefore, for $H = 1/2$ and $0<\alpha/\gamma\eta<2$, $f(x,t)$ is the distribution of the time-changed random variable  
	            $Y_{\alpha/\gamma \eta}\left(t\mathcal{W}_\eta \mathcal{W}_\gamma^{1/\eta}\right)$,
	            where $\mathcal{W}(t)$ is the Lamperti r.v.\ and $Y_{\alpha/\gamma \eta}$ is 
	        	a symmetric stable law of order $\frac{\alpha}{\gamma \eta}$.
	        \end{rem}
	        
	        \begin{rem}
		        We observe that for $\nu = 1$, $\alpha \in (0,2)$ and
		        $H\in (0,1)$, from \eqref{soluriesz}
		        we obtain the law of a symmetric stable process with 
		        a deterministic time-change, whose characteristic function is given by
		        \begin{equation}
			        F(\beta,t)= \exp\bigg\{-\frac{t^{2H}}{2}|\beta|^{\alpha}\bigg\}.
		        \end{equation}
	        \end{rem}

\end{document}